\numberwithin{equation}{section}
\author[S.\,van~Golden]{S.\,van~Golden}
\author[S.\,Kombrink]{S.\,Kombrink}
\author[T.\,Samuel]{T.\,Samuel}
\address[S.\,van~Golden]{Mathematisch Instituut, Universiteit Leiden, The Netherlands}
\address[S.\,Kombrink]{Fachbereich 3 Mathematik, Universit\"at Bremen, Germany}
\address[T.\,Samuel]{Department of Mathematics and Statistics, University of Exeter, UK}
\newcommand{\diam}{\operatorname{diam}}
\renewcommand{\emph}{\textsl}
\renewcommand{\textit}{\textsl}
\numberwithin{equation}{section}
\title{On the geometry of generalised Koch snowflakes}
\newtheorem{theorem}{Theorem}[section]
\newtheorem{prop}[theorem]{Proposition}
\newtheorem{lemma}[theorem]{Lemma}
\theoremstyle{definition}
\theoremstyle{remark}
\Crefname{theorem}{Theorem}{Theorems}
\Crefname{prop}{Proposition}{Propositions}
\Crefname{lemma}{Lemma}{Lemma}
\Crefname{cor}{Corollary}{Corollaries}
\Crefname{defn}{Definition}{Definitions}
\Crefname{question}{Question}{Questions}
\Crefname{remark}{Remark}{Remarks}
\Crefname{ex}{Example}{Examples}
\begin{document}

\begin{abstract}
We consider the geometry of a class of fractal sets in $\mathbb R^2$ that generalise the famous Koch curve and Koch snowflake. While the classical Koch curve is defined by an iterative process that divides a line segment into three parts and replaces the middle part by the legs of an isosceles triangle `above' the line segment, in this more general setting, a choice can be made at each iteration as to whether to place this triangle `above' or `below' the line segment. The resulting fractals bear a striking visual resemblance to curves appearing in nature, such as coastlines and snowflakes. While these fractals can be generated by a random process that flips a coin each time to decide the orientation of the triangle, leading to `almost sure' results for their geometrical properties, we define and study them deterministically to provide exact results. In particular, we show, using the theory of non-integer expansions, that the set of all possible values for the area enclosed by these generalised Koch curves is a closed interval. Moreover, we prove that the union of all these generalised snowflakes does not contain an open set, and has zero $2$-dimensional Lebesgue measure. Complementing these results, using arguments from calculus and fractal geometry, namely properties of geometric series and Frostman's Lemma, we show that each generalised Koch curve has infinite length and the same Hausdorff dimension as its classical counterpart. Further, we also give a classification for when a generalised Koch curve is a quasicircle.
\end{abstract}

\maketitle

\section{Introduction}\label{sec1}

Fractal geometry is a growing field of mathematics, both because of its theoretical beauty and its many applications, see for instance \cite{falconer2013fractal,MR1484412,MR665254}. The \textbf{$p$-Koch curve}, for $p \in (\frac14, \frac13]$, is one of the most studied fractal sets. Like most fractal sets it is generated via an iterative process. Let $V\subset \mathbb R^2$ denote the closed filled-in rhombus with vertices 
    \begin{align}\label{eq:main_vertices}
        P_{+} = (\tfrac{1}{2},0)^{\top}, \quad
        P_{-} = (-\tfrac{1}{2},0)^{\top}, \quad
        Q_{+} = (0,\tfrac{1}{2}\!\sqrt{4p-1})^{\top} \quad \text{and} \quad
        Q_{-} = (0,-\tfrac{1}{2}\!\sqrt{4p-1})^{\top},
    \end{align}
that is, $V \subset \mathbb{R}^2$ is the closed convex hull of $P_{+}$, $P_{-}$, $Q_{+}$ and $Q_{-}$, see Figure~\ref{fig:V:first}. We define the contractive affine maps $\psi_i:V \to V$ for $i \in \{0,1,2,3\}$ by
\begin{align*}
    \psi_{0}\begin{pmatrix}
        x\\
        y
    \end{pmatrix} &= 
    \begin{pmatrix}
        p & 0\\
        0 & p
    \end{pmatrix}\begin{pmatrix}
        x\\
        y
    \end{pmatrix} - \frac12 \begin{pmatrix}
        1-p \\
        0
    \end{pmatrix},\quad  
    \psi_{1}\begin{pmatrix}
        x\\
        y
    \end{pmatrix} = \frac{1}{2}\begin{pmatrix}
        1-2p & -\sqrt{4p-1}\\
        \sqrt{4p-1} & 1-2p
        \end{pmatrix}\begin{pmatrix}
        x\\
        y
    \end{pmatrix} + \frac{1}{4}\begin{pmatrix}
        2p-1\\
        \sqrt{4p-1}
    \end{pmatrix},\\[0.5em]
    \psi_{2}\begin{pmatrix}
        x\\
        y
    \end{pmatrix} &= 
    \frac{1}{2}\begin{pmatrix}
        1-2p & \sqrt{4p-1}\\
        -\sqrt{4p-1} & 1-2p
        \end{pmatrix}\begin{pmatrix}
        x\\
        y
    \end{pmatrix} + \frac{1}{4}\begin{pmatrix}
        1-2p\\
        \sqrt{4p-1}
    \end{pmatrix} 
    \quad \text{and} \quad
    \psi_{3}\begin{pmatrix}
        x\\
        y
    \end{pmatrix} = 
    \begin{pmatrix}
        p & 0\\
        0 & p
    \end{pmatrix}\begin{pmatrix}
        x\\
        y
    \end{pmatrix} + \frac12 \begin{pmatrix}
        1-p \\
        0
    \end{pmatrix}.
\end{align*}

\tikzmath{\x = 1/(2 * sqrt(6)); \y = (7/24) * \x; \z = (1 - 7/24) * \x; } 
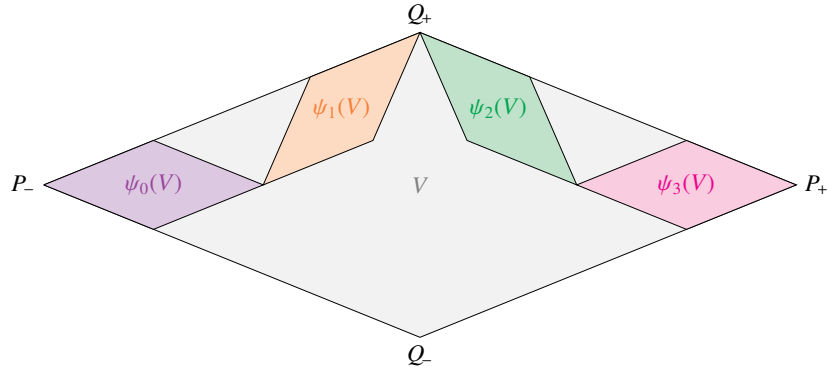
\begin{figure}[h!]
    \centering
    \scalebox{0.90}{
    \begin{tikzpicture}[scale = 11]
        \draw[black,fill=gray!10, -] (-1/2,0) -- (0,\x) -- (1/2,0) -- (0, -\x) -- (-1/2,0); %V
        \node at (0,0) {\color{gray}$V$};
        \filldraw[fill=Purple!25] (-1/2,0) -- (-17/48,\y) -- (7/24 - 1/2,0) -- (-17/48, -\y) -- (-1/2,0); %phi_(0,0)(V)
        \node at (7/48-1/2,0) {\color{Purple}$\psi_{0}(V)$};
        \filldraw[fill=Orange!25] (7/24 - 1/2,0) -- (-7/48,\z) -- (0, \x) -- (-1/16, \y) -- (7/24 - 1/2,0); %phi_(1,0)(V)
         \node at (-5/48,\x/2) {\color{Orange}$\psi_{1}(V)$};
         \filldraw[fill=Green!25] (0, \x) -- (7/48,\z) -- (1/2 - 7/24,0) -- (1/16, \y) -- (0, \x); %phi_(2,0)(V)
        \node at (5/48,\x/2) {\color{Green}$\psi_{2}(V)$};
         \filldraw[fill=magenta!25] (1/2 - 7/24,0) -- (17/48,\y) -- (1/2,0) -- (17/48, -\y) -- (1/2 - 7/24,0); %phi_(3,0)(V)
        \node at (1/2-7/48,0) {\color{magenta}$\psi_{3}(V)$};
        \filldraw[black] (-1/2,0) circle (0pt) node[anchor=east] {$P_{\!-}$};
        \filldraw[black] (1/2,0) circle (0pt) node[anchor=west] {$P_{\!+}$};
        \filldraw[black] (0,\x) circle (0pt) node[anchor=south] {$Q_{\!+}$};
        \filldraw[black] (0,-\x) circle (0pt) node[anchor=north] {$Q_{\!-}$};
    \end{tikzpicture}}
    \caption{The rhombus $V$ and the images of $V$ under $\psi_{0}$, $\psi_{1}$, $\psi_{2}$ and $\psi_{3}$ for $p = \tfrac{7}{24}$.}
    \label{fig:V:first}
\end{figure}

The $p$-Koch curve is then defined to be the set
\begin{align} \label{eq : p-Koch curve}
    \bigcup_{(u_k)_{k\in \mathbb N} \in \{0,1,2,3\}^{\mathbb N}}
    \bigcap_{k\in \mathbb N} \left(\psi_{u_1}\circ \psi_{u_2} \circ \cdots \circ \psi_{u_k}\right)(V),
\end{align}
see \Cref{fig:enterlabel} for a geometric approximation.

\begin{figure}[h!]
   \centering
   \includegraphics[width=0.49\linewidth]{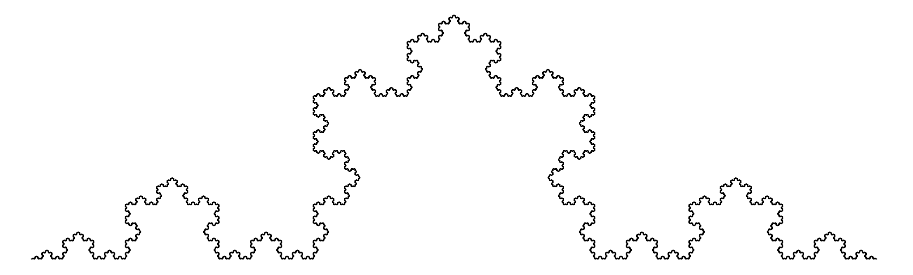}
    \includegraphics[width=0.49\linewidth]{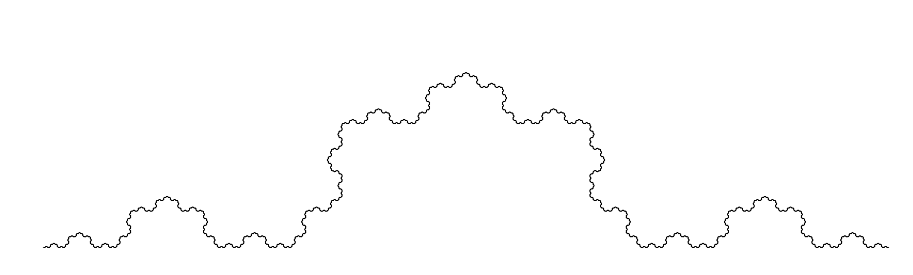}
    \caption{Approximations of the $p$-Koch curves for $p=\tfrac13$ (left) and for $p=(2+\sqrt{2})^{-1}$ (right).}
    \label{fig:enterlabel}
\end{figure}

In other words, it is the unique attractor set of the \textbf{iterated function system} $\{\psi_{0}, \psi_{1}, \psi_{2}, \psi_{3}\}$, see \cite{falconer2013fractal} for more on iterated function systems. The $p$-Koch curve is well known to be a continuous curve of infinite length in $\mathbb{R}^{2}$. The classical Koch curve (with $p=\tfrac13$) was introduced in \cite{von1906methode} as an example of a curve that is continuous everywhere but nowhere differentiable.

An alternative (but perhaps more intuitive) iterative process to construct the $p$-Koch curve using the maps $\psi_{0}, \psi_{1}, \psi_{2}$ and $\psi_{3}$ is to start with the line segment $E_0 = [-\tfrac12,\tfrac12]\,\times\,\{0\} \subset \mathbb{R}^{2}$ and to define $E_j = \bigcup_{i\in \{0,1,2,3\}} \psi_{i}(E_{j-1}) \subset \mathbb{R}^{2}$ for each $j\in \mathbb{N}$. As $j$ increases, the sets $E_j$ converge to the $p$-Koch curve with respect to the Hausdorff metric (see \cite{falconer2013fractal,MR1484412} for further details). At each iteration, each line segment is split into three segments, two outer segments of proportion $p$ and a middle segment of proportion $1-2p$, and the middle segment is replaced by two sides of an isosceles triangle with legs of proportional length $p$, see \Cref{fig:nolabel1}.

\tikzmath{ \x = sqrt(3)/2;}
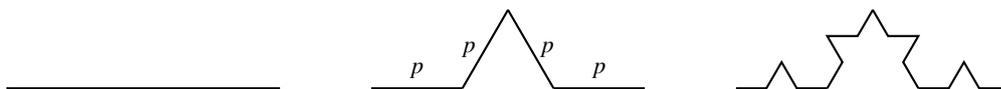
\begin{figure}[h!]
\centering
    \begin{tikzpicture}[scale = 1.2]
    %E_0
    \draw[black, thick, -] (0,0) -- (3,0) node[midway,below]{};
    %E_1
    \draw[black, thick, -] (4,0) -- (5,0) node[midway,above]{\small{$p$}};
    \draw[black, thick, -] (5,0) -- (5.5,\x);
    \draw[black, thick, -] (5.5,\x) -- (6,0);
    \draw[black, thick, -] (6,0) -- (7,0) node[midway,above]{\small{$p$}};
    \filldraw[black] (5.25,\x/2) circle (0pt) node[anchor=east] {\small{$p$}};
    \filldraw[black] (5.75,\x/2) circle (0pt) node[anchor=west] {\small{$p$}};
    %E_2
    \draw[black, thick, -] (8,0) -- (8 + 1/3,0) -- (8.5,\x/3) -- (8 + 2/3, 0) -- (9,0);
    \draw[black, thick, -] (9,0) -- (9+1/6,\x/3) -- (9,\x/3+\x/3) -- (9.5-1/6,\x - \x/3) -- (9.5,\x);
    \draw[black, thick, -] (9.5,\x) -- (9.5+1/6,\x - \x/3) -- (10, \x/3+\x/3) -- (10-1/6,\x/3) -- (10,0);
    \draw[black, thick, -] (10,0) -- (10 + 1/3,0) -- (10.5,\x/3) -- (10 + 2/3, 0) --  (11,0);    
    \end{tikzpicture}
    \caption{The curves $E_0$, $E_1$ and $E_2$ for $p=\tfrac13$.}
    \label{fig:nolabel1}
\end{figure}

In this construction, the orientation of the triangles that replace the middle parts of each line segment is fixed. 
In this article, we consider \textbf{generalised $p$-Koch curves}, where for each line segment in each iterative step a choice is made for the direction the triangle is pointed. To formalise this, we set $\phi_{(i,0)} = \psi_{i}$, for $i\in \{0,1,2,3\}$, and define the maps $\phi_{(i,1)}:V \to V$ for $i\in \{0,1,2,3\}$ by $\phi_{(0,1)} = \psi_{0}$, $\phi_{(3,1)} = \psi_{3}$,
\begin{align*}
    \phi_{(1,1)}\begin{pmatrix}
        x\\
        y
    \end{pmatrix} &= \frac{1}{2}\begin{pmatrix}
        1-2p & \sqrt{4p-1}\\
        -\sqrt{4p-1} & 1-2p
        \end{pmatrix}\begin{pmatrix}
        x\\
        y
    \end{pmatrix} + \frac{1}{4}\begin{pmatrix}
        2p-1\\
        -\sqrt{4p-1}
    \end{pmatrix}
 \qquad \text{and}\\[0.5em]
    \phi_{(2,1)}\begin{pmatrix}
        x\\
        y
    \end{pmatrix} &= 
    \frac{1}{2}\begin{pmatrix}
        1-2p & -\sqrt{4p-1}\\
        \sqrt{4p-1} & 1-2p
        \end{pmatrix}\begin{pmatrix}
        x\\
        y
    \end{pmatrix} + \frac{1}{4}\begin{pmatrix}
        1-2p\\
        -\sqrt{4p-1}
    \end{pmatrix},
\end{align*}
see \Cref{Fig:Robus-in}.

\tikzmath{\x = 1/(2 * sqrt(6)); \y = (7/24) * \x; \z = (1 - 7/24) * \x; } 
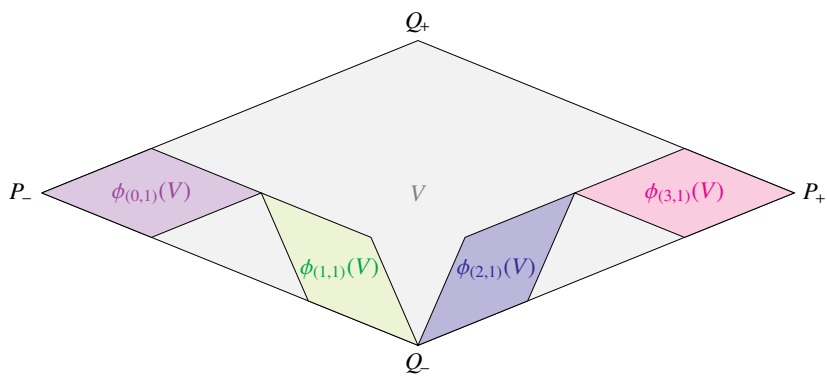
\begin{figure}[h!]
    \centering
    \scalebox{0.90}{
    \begin{tikzpicture}[scale = 11]
        \draw[black,fill=gray!10,-] (-1/2,0) -- (0,\x) -- (1/2,0) -- (0, -\x) -- (-1/2,0); %V
        \node at (0,0) {\color{gray}$V$};
        \filldraw[fill=Purple!25] (-1/2,0) -- (-17/48,\y) -- (7/24 - 1/2,0) -- (-17/48, -\y) -- (-1/2,0); %phi_(0,1)(V)
        \node at ({7/48-1/2},0) {\color{Purple}$\phi_{(0,1)}(V)$};
        \filldraw[fill=SpringGreen!25] (7/24 - 1/2,0) -- (-7/48,-\z) -- (0, -\x) -- (-1/16, -\y) -- (7/24 - 1/2,0); %phi_(1,1)(V)
        \node at (-5/48,-\x/2) {\color{Green}$\phi_{(1,1)}(V)$};
        \filldraw[fill=Blue!25] (0, -\x) -- (7/48,-\z) -- (1/2 - 7/24,0) -- (1/16, -\y) -- (0, -\x); %phi_(2,1)(V)
        \node at (5/48,-\x/2) {\color{Blue}$\phi_{(2,1)}(V)$};
        \filldraw[fill=magenta!25] (1/2 - 7/24,0) -- (17/48,\y) -- (1/2,0) -- (17/48, -\y) -- (1/2 - 7/24,0); %phi_(3,1)(V)
        \node at ({-7/48+1/2},0) {\color{magenta}$\phi_{(3,1)}(V)$};
        \filldraw[black] (-1/2,0) circle (0pt) node[anchor=east] {$P_{\!-}$};
        \filldraw[black] (1/2,0) circle (0pt) node[anchor=west] {$P_{\!+}$};
        \filldraw[black] (0,\x) circle (0pt) node[anchor=south] {$Q_{\!+}$};
        \filldraw[black] (0,-\x) circle (0pt) node[anchor=north] {$Q_{\!-}$};

    \end{tikzpicture}}
    \caption{The rhombus $V$ and the images of $V$ under $\phi_{(0,1)}$, $\phi_{(1,1)}$, $\phi_{(2,1)}$ and $\phi_{(3,1)}$ for $p = \tfrac{7}{24}$.}
    \label{Fig:Robus-in}
\end{figure}

\newpage

To construct a generalised $p$-Koch curve one applies the maps $\phi_{(a,b)}$, along certain sequences in $A^{\mathbb N}$, where $A = \{0,1,2,3\}\times \{0,1\}$, much like in \eqref{eq : p-Koch curve}. More precisely, we record the orientation of the triangles whose sides replace the middle ($1-2p$)-section of each line segment via a sequence of tuples $\mathbf s = ((s_k(0),s_k(1),...,s_k(4^{k}-1)))_{k = 0}^\infty$ belonging to $\mathds{S} = \prod_{k=0}^{\infty} \{ 0, 1\}^{4^k}$. For each such $\mathbf{s}$ and each $\mathbf{u} = (u_k)_{k \in \mathbb N} \in \{0,1,2,3\}^{\mathbb{N}}$ we set $\omega_1(\mathbf{u}, \mathbf{s}) = (u_1, s_0(0))$ and, for $k\in \mathbb N_{\geq 2}$, we let
    \begin{align}\label{eq:omega_u_s}
    \textstyle
    \omega_k(\mathbf{u}, \mathbf s) = (u_{k}, s_{k-1}(\sum_{j=1}^{k-1}u_j 4^{k-1-j})) \in A.
    \end{align}
Letting $\omega(\mathbf{u},\mathbf{s}) = (\omega_k(\mathbf{u},\mathbf s))_{k\in \mathbb N}$, we set
\begin{align}\label{eq:Sigma_s}
    \Sigma(\mathbf s) = \left \{ \omega(\mathbf{u},\mathbf s) = (\omega_k(\mathbf{u},\mathbf s))_{k\in \mathbb N} :  \mathbf{u} \in \{0,1,2,3\}^{\mathbb N} \right\} \subset A^{\mathbb N}.
\end{align}
The generalised $p$-Koch curve associated with $\mathbf s$ is then given by the set
\begin{align*} 
    \mathcal C(\mathbf s) = \bigcup_{(\omega_k)_{k\in \mathbb N} \in \Sigma(\mathbf s)}
    \bigcap_{k\in \mathbb N} \left(\phi_{\omega_1}\circ \phi_{\omega_2} \circ \cdots \circ \phi_{\omega_k}\right)(V).
\end{align*}
We show that the $\mathcal C(\mathbf s)$ is again a continuous curve, see \Cref{fig:enter-label2} for illustrations of approximations of two generalised $p$-Koch curves.

\begin{figure}[h!]    \centering{\transparent{0.9}\includegraphics[width=0.49\linewidth]{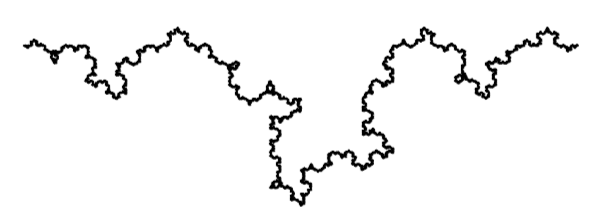}}
    \includegraphics[width=0.49\linewidth]{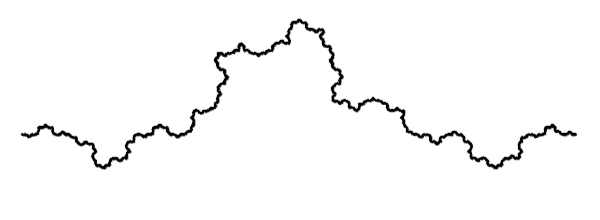}
    \caption{\textls[-16]{Approximations of generalised $p$-Koch curves for $p=\tfrac13$ (left) and $p=(2+\sqrt{2})^{-1}$ (right).}}
    \label{fig:enter-label2}
\end{figure}

A geometric way to construct the curve is to again consider the action of the maps on \mbox{$E_0 = [-\tfrac12,\tfrac12]\times \{0\}$ $\subset \mathbb{R}^{2}$}. For this, we fix an $\mathbf s \in \mathds{S}$ and set $C_{0}(\mathbf{s}) = E_{0}$. If the first entry of $\mathbf s$ is $(0)$ we set $C_{1}(\mathbf{s})$ to be the union of line segments $\phi_{(i,0)}(E_0)$ for $i\in \{0,1,2,3\}$; and if the first entry of $\mathbf s$ is $(1)$ we set $C_{1}(\mathbf{s})$ to be the union of line segments $\phi_{(i,1)}(E_0)$ for $i\in \{0,1,2,3\}$. That is, we replace the original line segment by two outer segments of length $p$ and replace the middle segment of length $1-2p$ by two sides of an isosceles triangle with legs of proportional length $p$ orientated upwards if the first entry of $\mathbf s$ is $(0)$; and downwards if the first entry of $\mathbf s$ is $(1)$. We then repeat this process iteratively on each of the line segments of $C_{1}(\mathbf{s})$ to generate $C_{2}(\mathbf{s})$, and continue the process \textsl{ad infinitum}, see \Cref{fig:nolabel2} for an illustration of this iterative process. Formally, we begin with $C_0(\mathbf s) = E_0$, and for $k\in \mathbb N$ and  $\mathbf{u} = (u_1,u_2,...,u_k)\in \{0,1,2,3\}^{k}$, we let $E^k_{\mathbf{u}}(\mathbf s)$ denote the line segment $\phi_{(u_1,s_0(0))}\circ \phi_{(u_2,s_1(u_1))}\circ\cdots \circ \phi_{(u_{k},s_{k-1}(\sum_{j=1}^{k-1} u_j\,4^{k-1-j}))}(E_0)$ of length $p^k$, and set
\begin{align} \label{eq : approximating curve}
    \mathcal C_k(\mathbf s) = \bigcup_{\mathbf{u} \in \{0,1,2,3\}^{k}} E^k_{\mathbf{u}}(\mathbf s).
\end{align}
For each $k\in \mathbb N$, the set $\mathcal C_k(\mathbf s)$ is then a continuous curve consisting of $4^k$ connected line segments of length $p^k$. Moreover, it can be shown that, as $k$ increases, the sets $\mathcal C_k(\mathbf s)$ converge to $\mathcal C(\mathbf s)$ with respect to the Hausdorff metric.

\tikzmath{ \x = sqrt(3)/2;}
\begin{figure}[ht]
\centering
    \begin{tikzpicture}[scale = 1.2]
    %E_0
    \draw[black, thick, -] (0,0) -- (3,0) node[midway,below]{};
    %E_1
    \draw[black, thick, -] (4,0) -- (5,0) -- (5.5,-\x) -- (6,0) -- (7,0);
    %E_2
    \draw[black, thick, -] (8,0) -- (8 + 1/3,0) -- (8.5,\x/3) -- (8 + 2/3, 0) -- (9,0);
    \draw[black, thick, -] (9,0) -- (9+1/6,-\x/3) -- (9,-\x/3-\x/3) -- (9.5-1/6,-\x + \x/3) -- (9.5,-\x);
    \draw[black, thick, -] (9.5,-\x) -- (9.5+1/6,-\x + \x/3) -- (9.5, -\x/3) -- (10-1/6,-\x/3) -- (10,0);
    \draw[black, thick, -] (10,0) -- (10 + 1/3,0) -- (10.5,-\x/3) -- (10 + 2/3, 0) --  (11,0);    
    \end{tikzpicture}
    \caption{The curves $\mathcal C_0(\mathbf s)$, $\mathcal C_1(\mathbf s)$ and $\mathcal C_2(\mathbf s)$ where $s_0 = (1)$, $s_1 = (0,1,0,1)$ for $p=\tfrac13$.}
    \label{fig:nolabel2}
\end{figure}

Note, if $s_k(i) = 0$ for all $k\in \mathbb N_0$ and $i \in \{0,1,...,4^{k}-1\}$, then the associated generalised $p$-Koch curve is the $p$-Koch curve in \eqref{eq : p-Koch curve}, whereas taking $s_k(i) = 1$ for every $k\in \mathbb N_0$ and $i \in \{0,1,...,4^{k}-1\}$ results in a $p$-Koch curve reflected in the $x$-axis.

From the $p$-Koch curves one may construct the \textbf{$p$-Koch snowflake}. This snowflake is built by placing three $p$-Koch curves along the sides of an equilateral triangle. Doing this but with three upside-down $p$-Koch curves results in what we refer to as the \textbf{$p$-Koch anti-snowflake}, see \Cref{fig:snowflake and antisnowflake}. 

\begin{figure}[ht]
    \centering
    \includegraphics[width=0.25\linewidth]{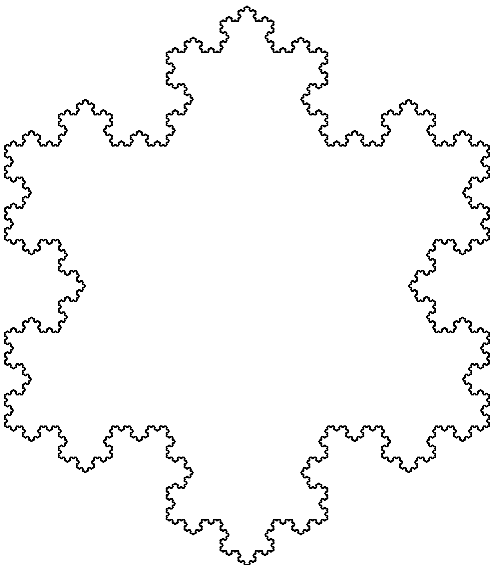}
    \hspace{3.5em} 
    \includegraphics[width=0.25\linewidth, angle=60]{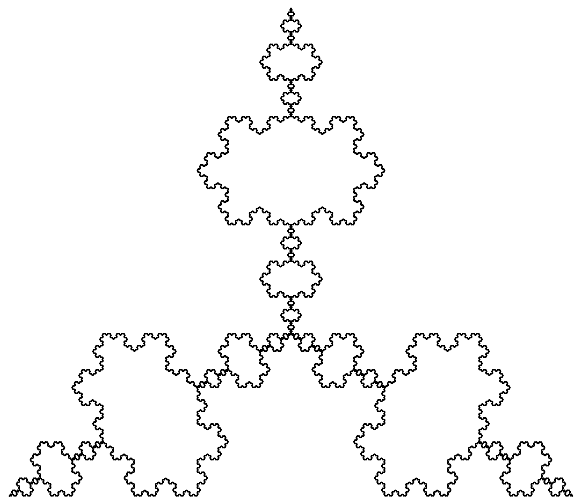}
    \caption{Approximations of the $p$-Koch snowflake (left) and the $p$-Koch anti-snowflake (right) for $p=\tfrac13$.}
    \label{fig:snowflake and antisnowflake}
\end{figure}

In the present article we focus on the geometry of a family of \textbf{generalised $p$-Koch snowflakes}, which are constructed by placing three generalised $p$-Koch curves along the sides of an equilateral triangle. Since a generalised $p$-Koch curve is defined by the corresponding sequence $\mathbf s = ((s_k(0),...,s_k(4^{k}-1)))_{k=0}^\infty$, a generalised $p$-Koch snowflake is defined by three such sequences, $\mathbf s$, $\mathbf t$ and $\mathbf r$. Formally, if we define the two affine maps $f_1, f_2: \mathbb R^2 \to \mathbb R^2$ by
\begin{align}\label{eq:translation_maps}
    f_1\begin{pmatrix}
        x\\
        y
    \end{pmatrix} = -\frac12\begin{pmatrix} 
        1 & -\sqrt{3}\\
        \sqrt{3} & 1
    \end{pmatrix}
    \begin{pmatrix}
        x\\
        y
    \end{pmatrix}
    +
    \frac{1}{4} \begin{pmatrix}
        1\\
        -\sqrt{3}
    \end{pmatrix}
    \quad \text{and} \quad
    f_2\begin{pmatrix}
        x\\
        y
    \end{pmatrix} = -\frac12\begin{pmatrix} 
        1 & \sqrt{3}\\
        -\sqrt{3} & 1
    \end{pmatrix}
    \begin{pmatrix}
        x\\
        y
    \end{pmatrix}
    -
    \frac{1}{4} \begin{pmatrix}
        1\\
        \sqrt{3}
    \end{pmatrix},
\end{align}
the generalised $p$-Koch snowflake corresponding to the choice sequences $\mathbf s$, $\mathbf t$ and $\mathbf r$ is given by
\begin{align*}
    \mathcal{F}(\mathbf s,\mathbf t,\mathbf r) = \mathcal C(\mathbf s) \cup f_1( \mathcal C(\mathbf t)) \cup f_2( \mathcal C(\mathbf r)),
\end{align*}
see \Cref{fig:random-snowflakes} for illustrations of approximations of three such $p$-Koch snowflakes.

\begin{figure}[ht]
    \centering
    \includegraphics[width=0.32\linewidth]{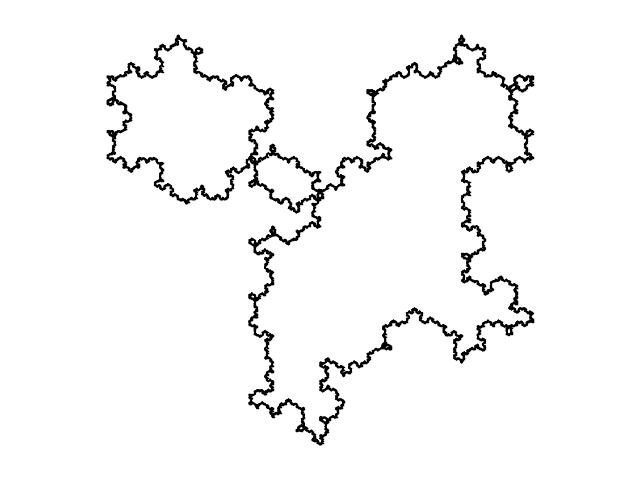}
    \includegraphics[width=0.32\linewidth]{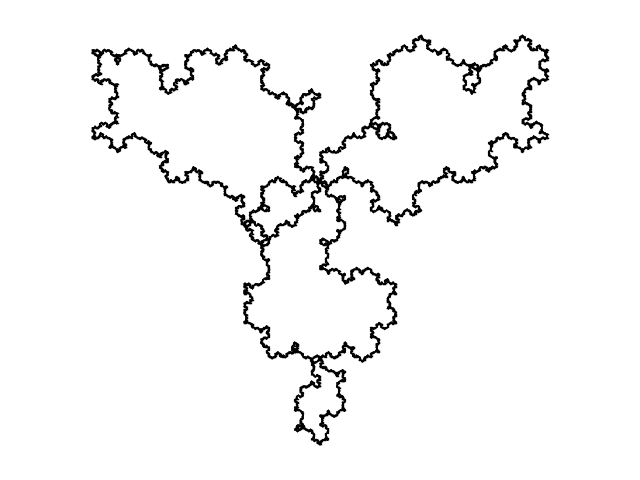}
    \includegraphics[width=0.32\linewidth]{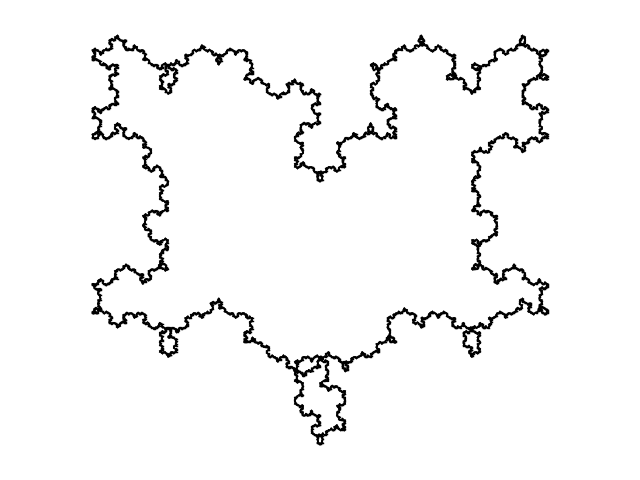}
    \caption{Approximations of three examples of generalised $p$-Koch snowflakes for $p=\tfrac13$.}
    \label{fig:random-snowflakes}
\end{figure}

The construction of the generalised $p$-Koch snowflake has parallels to that of Rohde snowflakes, which are particularly important domains as, up to bi-Lipschitz transformations, every planar quasidisk (namely a domain that is bounded by the image of a circle under a quasi-conformal map) can be realised as a Rohde snowflake \cite{Rhode2001}.

Our aim is to address the following four natural questions about generalised $p$-Koch snowflakes.
\begin{itemize}
    \item[(Q1)] As Hausdorff dimension is a natural example of a fractal dimension, which can be used to quantify the complexity of a fractal set, do the generalised $p$-Koch curves and -snowflakes have the same Hausdorff dimension as the classical $p$-Koch curve and $p$-Koch-snowflake?
    \item[(Q2)] Are generalised $p$-Koch snowflakes Jordan curves and, if so, are they quasicircles?
    \item[(Q3)] Let $y_{p}$ be the area enclosed by the $p$-Koch snowflake and let $x_{p}$ be the area enclosed by the $p$-Koch anti-snowflake. Given $y \in [x_{p},y_{p}]$, does there exist a generalised $p$-Koch snowflake whose enclosed area is $y$?  If so, how many such generalised $p$-Koch snowflakes exist for the given parameter $y$?
    \item[(Q4)] For each point in-between the $p$-Koch snowflake and the $p$-Koch anti-snowflake, does there exist a generalised $p$-Koch snowflake that contains that point?
\end{itemize}
In the sequel we address each of these questions: \Cref{sec22} is devoted to (Q1), in \Cref{sec_quasicircle} we find that the answer to (Q2) is yes in most (but not all) cases, \Cref{sec_area} answers (Q3) positively, and \Cref{sec3} answers (Q4) negatively.  

\section{Well-definedness and fractal properties of generalised \texorpdfstring{$p$}{p}-Koch snowflakes}\label{sec22}

Throughout this section, we let $p\in (\tfrac14,\tfrac13]$ be fixed. We prove that both constructions of the generalised $p$-Koch curves, given in the introduction, are well defined and yield the same fractal sets $\mathcal C(\mathbf s)$. We also prove that each generalised $p$-Koch curve has the same Hausdorff dimension as the classical $p$-Koch curve, answering (Q1). For this we note, for $(a,b)\in \{0,1,2,3\}\times \{0,1\}$, that $\phi_{(a,b)}$ is a similarity with contraction ratio $p$. That is, 
    \begin{align}\label{eq:contration_ratio_of_contractions}
    \lvert \phi_{(a,b)}(\mathbf x) - \phi_{(a,b)}(\mathbf y) \rvert = p \lvert \mathbf{x} - \mathbf{y} \rvert
    \end{align}
for all $\mathbf x, \mathbf y \in \mathbb R^2$, where $\lvert\,\cdot\,\rvert$ denotes the Euclidean distance. As before, we let $V\subset \mathbb R^2$ denote the closed convex hull of $P_{+}$, $P_{-}$, $Q_{+}$ and $Q_{-}$ and note that $V$ is the filled-in closed rhombus with vertices at $P_{\pm} = (\pm \tfrac12,0)^{\top}$ and $Q_{\pm} = (0,\pm \tfrac12\!\sqrt{4p-1})^{\top}$.  Algebraically, the set $V$ can be written as $V = \{(x,y)^{\top}\in \mathbb R^2 :  \lvert 2x \rvert + \lvert 2y(\sqrt{4p-1})^{-1} \rvert \leq 1\}$. Moreover, let $U = \{(x,y)^{\top}\in \mathbb R^2 :  \lvert 2x \rvert + \lvert 2y(\sqrt{4p-1})^{-1} \rvert < 1\}$ denote the interior of $V$.

\begin{lemma} \label{lemma : well-defined and OSC}
    For $(a,b)\in \{0,1,2,3\}\times \{0,1\}$, we have $\phi_{(a,b)}(V) \subset V$ and $\phi_{(a,b)}(U) \subset U$. Further, for $b\in \{0,1\}$ and $a, a' \in \{0,1,2,3\}$ with $a\neq a'$, we have $\phi_{(a,b)}(U) \cap \phi_{(a',b)}(U) = \emptyset$.
\end{lemma}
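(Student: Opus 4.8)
The plan is to verify the three assertions by direct computation, exploiting the algebraic description $V = \{(x,y)^{\top} : \lvert 2x\rvert + \lvert 2y(\sqrt{4p-1})^{-1}\rvert \leq 1\}$ and its strict analogue for $U$. The key observation is that each $\phi_{(a,b)}$ is a similarity with ratio $p$ (by \eqref{eq:contration_ratio_of_contractions}), so it maps the rhombus $V$ to a smaller rhombus of ``radius'' $p$, and it suffices to check that the four vertices of each image rhombus $\phi_{(a,b)}(V)$ lie in $V$; since $V$ is convex and the image is the convex hull of these four vertex-images, the containment $\phi_{(a,b)}(V)\subset V$ follows. The containment $\phi_{(a,b)}(U)\subset U$ then follows because $\phi_{(a,b)}$ is an affine homeomorphism carrying the boundary $\partial V$ into $\partial V$ (one checks no vertex image lands on $\partial V$ except at shared endpoints) and interiors to interiors; more cleanly, since $\phi_{(a,b)}$ is a homeomorphism of $\mathbb R^2$ with $\phi_{(a,b)}(V)\subset V$, it maps the interior $U$ of $V$ into the interior of $\phi_{(a,b)}(V)$, which is contained in $U$.

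For the first two claims I would organise the computation by noting that the maps $\phi_{(0,0)}=\psi_0$ and $\phi_{(3,0)}=\psi_3$ are simple scalings by $p$ composed with a horizontal translation toward $P_{-}$ and $P_{+}$ respectively, so their images are the two small rhombi abutting the endpoints, visibly inside $V$ (see \Cref{fig:V:first}). The maps $\psi_1,\psi_2$ and $\phi_{(1,1)},\phi_{(2,1)}$ are rotation-scalings placing rhombi around the vertices $Q_{+}$ and $Q_{-}$; here I would simply evaluate each map on the four vertices $P_{\pm},Q_{\pm}$ and substitute into the inequality $\lvert 2x\rvert + \lvert 2y(\sqrt{4p-1})^{-1}\rvert \leq 1$, confirming in each case that the value is at most $1$ (with strict inequality off the shared endpoints). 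This is the routine but slightly tedious heart of the argument, and I would present it for one representative map of each type, indicating that the others follow by the evident symmetries (reflection in the $x$-axis interchanges the $b=0$ and $b=1$ families, and reflection in the $y$-axis interchanges the indices $a$ and $3-a$).

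For the disjointness claim, fix $b\in\{0,1\}$ and note that the four images $\phi_{(0,b)}(V),\ldots,\phi_{(3,b)}(V)$ tile the broken-line neighbourhood of the curve $C_1$; their pairwise intersections can only occur at the common endpoints of consecutive line segments $E^1_{\mathbf u}$. The clean way to argue this is to observe that the four image rhombi have their long diagonals lying along the four segments of $\mathcal C_1(\mathbf s)$ (for the appropriate first entry of $\mathbf s$), and consecutive segments meet only at a single shared point which is a vertex of each adjoining rhombus. I would verify that the consecutive image rhombi meet exactly at these shared vertices and that non-consecutive ones are separated, so that the \emph{interiors} $\phi_{(a,b)}(U)$ are pairwise disjoint.

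The main obstacle I anticipate is not conceptual but bookkeeping: the rotation-scaling maps $\psi_1,\psi_2,\phi_{(1,1)},\phi_{(2,1)}$ involve the factor $\sqrt{4p-1}$, and substituting their vertex images into the weighted $\ell^1$-inequality defining $V$ produces expressions in $p$ and $\sqrt{4p-1}$ that must be simplified and bounded uniformly over $p\in(\tfrac14,\tfrac13]$. Verifying the strictness needed for the open-set statements — that image vertices land strictly inside $U$ except where two rhombi are designed to touch — requires care at precisely the shared-endpoint points, where the inequality becomes an equality. I would handle this by identifying in advance which vertex of each image rhombus coincides with a segment endpoint of $\mathcal C_1$, treating those points separately, and checking strict inequality for the remaining vertices.
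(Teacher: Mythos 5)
For the two containment claims your plan coincides with the paper's proof: evaluate the vertex images $\phi_{(a,b)}(P_{\pm})$ and $\phi_{(a,b)}(Q_{\pm})$, check the defining inequality $\lvert 2x\rvert + \lvert 2y(\sqrt{4p-1})^{-1}\rvert \leq 1$, and conclude by linearity and convexity of $V$; your topological derivation of $\phi_{(a,b)}(U)\subset U$ (a homeomorphism of $\mathbb R^{2}$ sends the interior $U$ onto the interior of $\phi_{(a,b)}(V)$, which lies in $U$ once $\phi_{(a,b)}(V)\subset V$ is known) is, if anything, cleaner than the paper's brief appeal to convexity. One caveat: your first justification, that $\phi_{(a,b)}$ carries $\partial V$ into $\partial V$, is false --- for instance $\psi_{0}(P_{+})=(p-\tfrac12,0)^{\top}$ is an interior point of $V$ although $P_{+}\in\partial V$ --- but you discard it in favour of the correct argument, so no harm is done.

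The disjointness claim is where you genuinely diverge from the paper, and where your plan has a concrete defect. The paper avoids any rhombus-versus-rhombus intersection analysis: it observes that each interior $\phi_{(a,b)}(U)$ is contained in one of six pairwise disjoint open boxes or strips, for example $\phi_{(0,b)}(U)\subset(-\tfrac12,p-\tfrac12)\times\mathbb R$, $\phi_{(1,0)}(U)\subset(p-\tfrac12,0)\times(0,\tfrac12\sqrt{4p-1})$ and $\phi_{(1,1)}(U)\subset(p-\tfrac12,0)\times(-\tfrac12\sqrt{4p-1},0)$, which settles all pairs at once. Your route --- showing that consecutive closed image rhombi meet exactly at the shared endpoints of the segments of $\mathcal C_{1}$ --- works for $p\in(\tfrac14,\tfrac13)$, but the stated sub-claim fails at the endpoint $p=\tfrac13$: there $\phi_{(1,0)}(V)$ and $\phi_{(2,0)}(V)$ intersect in an entire common edge, namely the segment on the $y$-axis joining $\phi_{(1,0)}(Q_{-})=\phi_{(2,0)}(Q_{-})=(0,\tfrac16\sqrt{1/3})^{\top}$ to $\phi_{(1,0)}(P_{+})=\phi_{(2,0)}(P_{-})=Q_{+}=(0,\tfrac12\sqrt{1/3})^{\top}$, and not merely in the single vertex $Q_{+}$. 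The lemma's conclusion survives, because an edge has empty interior and two convex bodies whose interiors meet must overlap in a non-empty open set; but your verification step, as written (``meet exactly at these shared vertices''), would fail in this boundary case and must be amended to allow intersections that are either a shared vertex or a shared edge.
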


\begin{proof}
    Since each map $\phi_{(a,b)}$ for $(a,b)\in \{0,1,2,3\}\times \{0,1\}$ is a non-degenerate similarity, the image $\phi_{(a,b)}(V)$ is again a closed rhombus with vertices $\phi_{(a,b)}(P_{\pm})$ and $\phi_{(a,b)}(Q_{\pm})$, while $\phi_{(a,b)}(U)$ is the (non-trivial) interior of this rhombus. We can therefore understand these images by understanding where the vertices of $V$ are mapped to. Recall that $\phi_{(0,0)} = \phi_{(0,1)}$ and $\phi_{(3,0)} = \phi_{(3,1)}$, and observe the following. 
    \begin{alignat}{4}
        \phi_{(0,0)}(P_{\!-}) &= P_{\!-}\qquad
        &\phi_{(0,0)}(P_{\!+}) &= \phi_{(1,0)}(P_{\!-}) = \phi_{(1,1)}(P_{\!-}) = (p-\tfrac12,0)^\top\nonumber\\
        \phi_{(1,0)}(P_{\!+}) &= \phi_{(2,0)}(P_{\!-}) = Q_{\!+}\qquad
        &\phi_{(1,1)}(P_{\!+}) &= \phi_{(2,1)}(P_{\!-}) = Q_{\!-} \nonumber \\ 
        \phi_{(3,0)}(P_{\!+}) &= P_{\!+}\qquad
        &\phi_{(2,0)}(P_{\!+}) &= \phi_{(2,1)}(P_{\!+}) = \phi_{(3,0)}(P_{\!-}) = (\tfrac12-p,0)^\top \nonumber\\
        \phi_{(0,0)}(Q_{\pm}) &= (\tfrac{p-1}2, \pm \tfrac{p}2 \sqrt{4p-1})^\top\qquad
        &\phi_{(1,0)}(Q_{\!+}) &= (-\tfrac{p}2, \tfrac{1-p}2\sqrt{4p-1})^\top \nonumber \\
        \phi_{(1,0)}(Q_{\!-}) &= (\tfrac{3p-1}{2}, \tfrac{p}2\sqrt{4p-1})^\top\qquad
        &\phi_{(1,1)}(Q_{\!+}) &= (\tfrac{3p-1}{2}, -\tfrac{p}2\sqrt{4p-1})^\top \label{eq:end_pts_identicification}\\
        \phi_{(1,1)}(Q_{\!-}) &= (-\tfrac{p}2, -\tfrac{1-p}2\sqrt{4p-1})^\top\qquad
        &\phi_{(2,0)}(Q_{\!+}) &= (\tfrac{p}2, \tfrac{1-p}2\sqrt{4p-1})^\top \nonumber \\ 
        \phi_{(2,0)}(Q_{\!-}) &= (\tfrac{1-3p}{2}, \tfrac{p}2\sqrt{4p-1})^\top\qquad
        &\phi_{(2,1)}(Q_{\!+}) &= (\tfrac{1-3p}{2}, -\tfrac{p}2\sqrt{4p-1})^\top \nonumber\\
        \phi_{(2,1)}(Q_{\!-}) &= (\tfrac{p}2, -\tfrac{1-p}2\sqrt{4p-1})^\top\qquad
        &\phi_{(3,0)}(Q_{\pm}) &= (\tfrac{1-p}2, \pm \tfrac{p}2 \sqrt{4p-1})^\top \nonumber
    \intertext{Elementary calculations show that each of the above points satisfies the defining relation $\lvert 2x \rvert + \lvert 2y(\sqrt{4p-1})^{-1} \rvert \leq 1$ of $V$, and thus $\phi_{(a,b)}(P_\pm),\phi_{(a,b)}(Q_\pm)\in V$. By this, the linearity of our maps, and the convexity of $V$, it follows that $\phi_{(a,b)}(V) \subset V$ and $\phi_{(a,b)}(U)\subset U$ for all $(a,b)\in \{0,1,2,3\}\times \{0,1\}$. For the mutual disjointness of the sets $\phi_{(a,b)}(U)$ we notice the following.}
        \phi_{(0,0)}(U) &= \phi_{(0,1)}(U) \subset (-\tfrac12,p-\tfrac12) \times \mathbb R\qquad 
        &\phi_{(1,0)}(U) &\subset (p-\tfrac12,0)\times (0,\tfrac12\sqrt{4p-1}) \nonumber \\
        \phi_{(1,1)}(U) &\subset (p-\tfrac12,0)\times (-\tfrac12\sqrt{4p-1}, 0)\qquad
        &\phi_{(2,0)}(U) &\subset (0,\tfrac12-p)\times(0,\tfrac12\sqrt{4p-1}) \nonumber \\
        \phi_{(2,1)}(U) &\subset (0,\tfrac12-p)\times (-\tfrac12\sqrt{4p-1},0) \qquad
        &\phi_{(3,0)}(U) &= \phi_{(3,1)}(U) \subset (\tfrac12-p,\tfrac12)\times \mathbb R \nonumber
    \end{alignat}
    Since these sets are mutually disjoint, so are the sets $\phi_{(a,b)}(U)$.
\end{proof}

The first part of \Cref{lemma : well-defined and OSC} together with \eqref{eq:contration_ratio_of_contractions}, shows that the maps $\phi_{(a,b)}$ for $(a,b)\in \{0,1,2,3\}\times \{0,1\}$ are contractive self-maps on the compact set $V$. Therefore, for any sequence $\omega = (\omega_k)_{k\in \mathbb N} \in A^{\mathbb N}$, where $A$ is the alphabet $\{0,1,2,3\}\times \{0,1\}$, it follows that $((\phi_{\omega_1}\circ \phi_{\omega_2}\circ \cdots \circ \phi_{\omega_k})(V))_{k\in \mathbb N}$ is a nested sequence of compact sets with decreasing diameter, and so, by the Cantor Intersection Theorem, the intersection $\bigcap_{k\in \mathbb N} (\phi_{\omega_1}\circ \phi_{\omega_2}\circ \cdots \circ \phi_{\omega_k})(V)$ contains precisely one element, that is $\pi(\omega) = \lim_{k\rightarrow \infty} \phi_{\omega_1}\circ \phi_{\omega_2}\circ \cdots \circ \phi_{\omega_k}(x_0)$, where this limit is independent of the choice of $x_0\in V$. Hence, if we define the \textbf{projection map} $\pi: A^{\mathbb N} \rightarrow V$ by $\omega \mapsto \pi(\omega)$, then we see that $\mathcal C(\mathbf s) = \pi(\Sigma(\mathbf s))$ is a well-defined and non-empty subset of $V$ for each sequence $\mathbf s = ((s_k(0),s_k(1),...,s_k(4^{k}-1)))_{k = 0}^\infty $ with $s_k(i) \in \{0,1\}$ for every $k\in \mathbb N_0$ and $i\in \{0,1,...,4^{k}-1\}$.

In the cases where there is some $s\in \{0,1\}$ such that $\mathbf s$ is given by $s_k(i) = s$ for each $k\in \mathbb N_0$ and $i\in \{0,1,...,4^k-1\}$, we have $\Sigma(\mathbf s) = (\{0,1,2,3\}\times \{s\})^{\mathbb N}$. The case $s=0$ describes the $p$-Koch curve and the case $s=1$ describes an upside-down Koch curve. In these situations it is well-known that the self-map $\Phi_s$ on the space of compact non-empty subsets of $V$ given by $\Phi_s(E) = \bigcup_{i=0}^3 \phi_{(i,s)}(E)$ is a contraction with respect to the Hausdorff metric. By the Banach fixed-point theorem there exists a unique non-empty compact set $C\subseteq V$ satisfying $\Phi_s(C) = C$. In fact, we have $C = \mathcal C(\mathbf s)$ and say that $\mathcal C(\mathbf s)$ is the attractor set of the iterated function system $\{\phi_{(i,s)} : i\in \{0,1,2,3\}\}$ and that $\Phi_s$ is the corresponding \textbf{Hutchinson operator}, see \cite{hutchinson1981fractals}. 

By similar arguments to those above, for any non-empty subset $E \subseteq V$ and any sequence $\omega = (\omega_k)_{k\in \mathbb N} \in A^{\mathbb N}$ the sequence of sets $((\phi_{\omega_1}\circ \cdots \circ \phi_{\omega_k})(E))_{k\in \mathbb N}$ converges in the Hausdorff metric to the singleton $\{\pi(\omega)\}$. In particular, taking $E = E_0 = [-\tfrac12,\tfrac12]\times\{0\} \subset V$, we see, for any $\mathbf{s} \in \prod_{k=0}^{\infty} \{ 0, 1\}^{4^k}$, that the curves $\mathcal C_k(\mathbf s)$ defined in \eqref{eq : approximating curve} converge to the generalised $p$-Koch curve $\mathcal C(\mathbf s)$ in the Hausdorff metric. Further, observe that, for $(a, b) \in \{0, 1, 2, 3\} \times \{0,1\}$, the set $\phi_{(a,b)}(E_0)$ is a line segment of length $p$ with endpoints $\phi_{(a,b)}(P_{\pm})$. Since we have, for $b\in \{0,1\}$, that $\phi_{(0,b)}(P_{\!-}) = P_{\!-}$, $\phi_{(j+1,b)}(P_{\!-}) = \phi_{(j,b)}(P_{\!+})$ for each $j\in \{0,1,2\}$, and $\phi_{(3,b)}(P_{\!+}) = P_{\!+}$, it follows that, for $k\in \mathbb N_0$, each set $\mathcal C_k(\mathbf s)$ is a continuous curve with endpoints $P_\pm$ consisting of $4^k$ line segments of length $p^k$. These observations in tandem with \Cref{lemma : well-defined and OSC}, yield that the generalised $p$-Koch curve $\mathcal C(\mathbf s)$ is a continuous curve of infinite length with endpoints $P_\pm$.

The second part of \Cref{lemma : well-defined and OSC} shows that $\{\phi_{(i,0)} : i\in \{0,1,2,3\}\}$ and $\{\phi_{(i,1)} : i\in \{0,1,2,3\}\}$ satisfy the \textbf{open set condition}. That is, for $s\in \{0,1\}$ there exists a non-empty open set $U\subset V$ such that $\bigcup_{i=0}^3 \phi_{(i,s)}(U) \subseteq U$ and the sets $\phi_{(i,s)}(U)$, for $i\in \{0,1,2,3\}$, are pairwise disjoint. It was shown in \cite[Theorem 5.3(1)]{hutchinson1981fractals} that the Hausdorff dimension of the attractor of an iterated function system $\{ g_i : i\in I\}$ consisting of finitely many similarities and satisfying the open set condition is given by the unique number $r$ solving $\sum_{i\in I} c_i^r = 1$ where $c_i$ is the contraction ratio of $g_i$. Since each map $\phi_{(a,b)}$ is a similarity of contraction ratio $p$, this implies that the $p$-Koch curve and the upside-down $p$-Koch curve have Hausdorff dimension $\log(4)/\log(p^{-1})$. In the following we show that all generalised $p$-Koch curves have the same Hausdorff dimension, thus giving an affirmative answer to (Q1), using the same steps as in the proof for \cite[Theorem 9.3]{falconer2013fractal}.

\begin{prop}\label{prop:dimension}
    For $p\in (\tfrac14,\tfrac13]$ and $\mathbf{s} \in \mathds{S}$, the Hausdorff dimension of $\mathcal C(\mathbf{s})$ is
    \begin{align*}
        \dim_{\mathcal{H}}(\mathcal C(\mathbf s)) = \frac{\log(4)}{\log(p^{-1})}.
    \end{align*}
\end{prop}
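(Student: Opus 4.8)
The plan is to establish matching upper and lower bounds for $\dim_{\mathcal H}(\mathcal C(\mathbf s))$, writing $s = \log(4)/\log(p^{-1})$ and using throughout that this is exactly the exponent for which $4p^{s} = 1$, equivalently $p^{s} = \tfrac14$.

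For the upper bound I would use the natural covers supplied by the construction. For $k \in \mathbb N$ and a word $\mathbf u = (u_1,\dots,u_k) \in \{0,1,2,3\}^{k}$, write $\Psi_{\mathbf u} = \phi_{\omega_1(\mathbf u, \mathbf s)} \circ \cdots \circ \phi_{\omega_k(\mathbf u, \mathbf s)}$, which by \eqref{eq:contration_ratio_of_contractions} is a similarity of ratio $p^{k}$. Since every $\omega \in \Sigma(\mathbf s)$ has a length-$k$ prefix of the form $\omega(\mathbf u, \mathbf s)$ for some $\mathbf u \in \{0,1,2,3\}^{k}$, the $4^{k}$ sets $\Psi_{\mathbf u}(V)$, each of diameter $p^{k}\diam(V)$, cover $\mathcal C(\mathbf s) = \pi(\Sigma(\mathbf s))$. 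Hence $\sum_{\mathbf u} (\diam \Psi_{\mathbf u}(V))^{s} = 4^{k}(p^{k}\diam V)^{s} = (4p^{s})^{k}(\diam V)^{s} = (\diam V)^{s}$, which is finite and independent of $k$; letting $k \to \infty$, so that the covering diameters tend to $0$, gives $\mathcal H^{s}(\mathcal C(\mathbf s)) < \infty$ and therefore $\dim_{\mathcal H}(\mathcal C(\mathbf s)) \le s$. This step is uniform in $\mathbf s$ and needs no separation hypothesis.

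For the lower bound I would invoke the mass distribution principle (Frostman's Lemma), and the key preliminary step—which I expect to be the main obstacle—is to show that the open set condition of \Cref{lemma : well-defined and OSC} propagates to all levels: for each $k$, the $4^{k}$ open sets $\Psi_{\mathbf u}(U)$, $\mathbf u \in \{0,1,2,3\}^{k}$, are pairwise disjoint. To see this, take distinct $\mathbf u, \mathbf u'$ and let $j$ be the first coordinate at which they differ. Because the orientation bit in the second entry of $\omega_j(\,\cdot\,,\mathbf s)$ in \eqref{eq:omega_u_s} depends only on $u_1,\dots,u_{j-1}$, which $\mathbf u$ and $\mathbf u'$ share, the two words use the \emph{same} second coordinate $b$ at level $j$; the level-$j$ images therefore land in the disjoint sets $\phi_{(u_j,b)}(U)$ and $\phi_{(u_j',b)}(U)$ provided by the second part of \Cref{lemma : well-defined and OSC}, and since $\phi_{(a,b)}(U)\subset U$ the full level-$k$ images remain disjoint. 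This is precisely where the dependence structure of $\mathbf s$ is essential and where the argument departs from the classical single-iterated-function-system case; once it is in place, the rest is routine.

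With disjointness in hand I would let $\nu$ be the uniform Bernoulli measure on $\{0,1,2,3\}^{\mathbb N}$ (equal weights $\tfrac14$) and set $\mu$ to be its push-forward under $\mathbf u \mapsto \pi(\omega(\mathbf u,\mathbf s))$, a Borel probability measure supported on $\mathcal C(\mathbf s)$ for which every length-$k$ cylinder has $\nu$-mass $4^{-k}$. Fixing a small $r>0$ and choosing $k$ with $p^{k+1} \le r < p^{k}$, I note that any point of $\mathcal C(\mathbf s)$ lying in $B(x,r)$ sits in some level-$k$ piece $\Psi_{\mathbf u}(V)$ meeting $B(x,r)$, so $\mu(B(x,r))$ is at most $4^{-k}$ times the number of such pieces. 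A standard packing argument bounds this number by a constant $N$ independent of $x$, $k$ and $\mathbf s$: each $\Psi_{\mathbf u}(U)$ contains the image of a fixed inscribed ball of $U$, hence a ball of radius $\rho_0 p^{k}$, these balls are pairwise disjoint by the previous paragraph, and every piece meeting $B(x,r)$ lies within a ball of radius $O(p^{k})$ about $x$, so comparing Lebesgue areas yields such an $N$. Consequently $\mu(B(x,r)) \le N\,4^{-k} = N(p^{k})^{s} \le N p^{-s} r^{s} = 4N r^{s}$, and the mass distribution principle gives $\mathcal H^{s}(\mathcal C(\mathbf s)) \ge (4N)^{-1} > 0$, whence $\dim_{\mathcal H}(\mathcal C(\mathbf s)) \ge s$. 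Combining the two bounds proves the claim. The only genuinely delicate point beyond the level-wise open set condition is ensuring that the packing constant $N$ does not depend on $\mathbf s$, which the uniform disjointness above makes automatic.
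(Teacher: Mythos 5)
Your proof is correct and follows essentially the same route as the paper's: natural level-$k$ covers for the upper bound, and the mass distribution principle applied to the push-forward of the uniform Bernoulli measure on $\{0,1,2,3\}^{\mathbb N}$ for the lower bound, with the packing count obtained by volume comparison (the paper invokes \cite[Lemma 9.2]{falconer2013fractal} for exactly this estimate). Your explicit verification that the disjointness in \Cref{lemma : well-defined and OSC} propagates to all levels -- because the orientation bit at the first differing index depends only on the shared prefix -- spells out a step the paper uses but does not detail, and is a worthwhile addition rather than a departure.
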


Since any generalised $p$-Koch snowflake $\mathcal F(\mathbf s, \mathbf t, \mathbf r)$ is given by $\mathcal C(\mathbf s)\cup f_1(\mathcal C(\mathbf t)) \cup f_2(\mathcal C(\mathbf r))$ for some $\mathbf s, \mathbf t,\mathbf r \in \mathds{S}$, since $f_1$ and $f_2$ are both bi-Lipschitz, and since bi-Lipschitz mappings preserve Hausdorff dimension, \Cref{prop:dimension} implies that $\dim_{\mathcal{H}}(\mathcal F(\mathbf s, \mathbf t, \mathbf{r})) = \max\{\dim_{\mathcal{H}}(C(\mathbf q)) : \mathbf q\in\{\mathbf{s}, \mathbf t, \mathbf r\}\} = \log(4) / \log(p^{-1})$.

\begin{proof}[Proof of \Cref{prop:dimension}]
    Let $p \in (\tfrac14, \tfrac13]$ and $\mathbf s = ((s_k(0),...,s_k(4^{k}-1)))_{k=0}^\infty\in \mathds{S}$ be fixed, and set $d = \log(4)/\log(p^{-1})$.
    For $k\in \mathbb N$, $u_1,...,u_k\in \{0,1,2,3\}$ and $B\subseteq V$, set 
    \begin{align*}
        B^{u_1,...,u_k} = \phi_{(u_1,s_0(0))}\circ \phi_{(u_2,s_1(u_1))}\circ\cdots \circ \phi_{(u_{k},s_{k-1}(\sum_{j=1}^{k-1} u_j 4^{k-1-j}))}(B).
    \end{align*}
    In particular, for $k\in \mathbb N$, we have $\mathcal C(\mathbf s) \subset \bigcup_{u_1,...,u_k\in \{0,1,2,3\}} V^{u_1,...,u_k}$. For each $\omega_1,...,\omega_k \in \{0,1,2,3\}\times\{0,1\}$ the composition $\phi_{\omega_1}\circ...\circ \phi_{\omega_k}$ is a similarity with contraction ratio $p^k$, so each set $V^{u_1,...,u_k}$ is a closed rhombus of diameter $p^k$. For $r, \delta \in \mathbb{R}$, with $r \geq 0$ and $\delta > 0$, let $H^r$ denote the $r$-dimensional Hausdorff measure and $H^r_{\delta}$ its $\delta$-approximate, see \cite{falconer2013fractal} for precise definitions. 
    For each $r>d$ we have $4p^r < 1$ and so 
    \begin{align*}
         H^r(\mathcal C(\mathbf s)) = \lim_{k\rightarrow \infty} H^r_{\! p^{k}}(\mathcal C(\mathbf s)) \leq \lim_{k\rightarrow \infty } \sum_{u_1,...,u_k\in \{0,1,2,3\}} \diam(V^{u_1,...,u_k})^r = \lim_{k\rightarrow \infty} 4^k \cdot p^{kr} = \lim_{k\rightarrow \infty} (4p^r)^k = 0.
    \end{align*}
    Hence, $\dim_{\mathcal{H}}(\mathcal C(\mathbf s)) \leq d$. For the opposite inequality, we define a finite measure on $\mathbb R^2$ that has support $\mathcal C(\mathbf s)$ and prove it satisfies the conditions of the \textbf{mass distribution principle}, see \cite[Section 4.1]{falconer2013fractal}, allowing us to conclude that $d \leq \dim_{\mathcal{H}}(\mathcal C(\mathbf s))$. 
    
    Thus, it remains to show that there exists a finite measure on $\mathbb R^2$ that has support $\mathcal C(\mathbf s)$ satisfying the conditions of the mass distribution principle, namely, that there exists a constant $c$ and a Borel measure $\mu$ such that, given a Borel set $M$ of diameter $r < 1$,  the $\mu$-measure of $M$ is bounded above by $c \cdot r^{d}$.

    To this end, let $I = \{0,1,2,3\}^{\mathbb N}$ and for each $k\in \mathbb N$ and $u_1,...,u_k\in \{0,1,2,3\}$ we consider the \textbf{cylinder set} $I_{u_1,...,u_k} = \left \{(i_m)_{m\in \mathbb N} \in I : i_1 = u_1,..., i_k = u_k \right\}$; when equipping $\{0,1,2,3\}$ with the discrete topology and $I$ with the product topology, the set of cylinder sets form a basis for this topology on $I$. We define a Borel measure $\mu$ on $I$ by setting $\mu(I_{u_1,...,u_k}) = 4^{-k}$ for each $k\in \mathbb N$ and each $u_1,...,u_k\in \{0,1,2,3\}$. Note, by Carath\'eodory's extension theorem, that $\mu$ is a well-defined measure and that $\mu(I) = \sum_{i=0}^3 \mu(I_i) = 4\cdot 4^{-1} = 1$. For $\mathbf{u} \in I$ we write $x_{\mathbf{u}} = \pi(\omega(\mathbf{u},\mathbf{s}))$, and recall that $\mathcal C(\mathbf s) = \pi(\Sigma(\mathbf s)) = \{ x_{\mathbf{u}} : \mathbf{u} \in I \}$, where $\omega(\mathbf{u},\mathbf{s})$ is as defined in \eqref{eq:omega_u_s}. We define a measure $\widetilde \mu$ on $\mathbb R^2$ by setting $\widetilde \mu(M) = \mu(\{\mathbf{u} \in I : x_{\mathbf u} \in M\})$ for any Borel set $M\subseteq \mathbb R^2$. Note that the set $\{\mathbf{u}\in I : x_{\mathbf u} \in M\}$ is measurable since, by construction, $\pi(\omega(\cdot,\mathbf{s}))$ is continuous, and observe $\widetilde \mu(M) = \widetilde \mu(M\cap \mathcal C(\mathbf s))$ for each Borel set $M\subseteq \mathbb R^2$ and that $\widetilde \mu(\mathbb R^2) = \widetilde \mu(\mathcal C(\mathbf s)) = \mu(I) = 1$. We will verify the conditions of the mass distribution principle, namely that there exists a constant $c>0$ satisfying $\widetilde \mu(M) \leq c\cdot\diam(M)^d$ for each Borel set $M\subset \mathbb R^2$ satisfying $\diam(M) < 1$.

    Let $M \subset \mathbb R^2$ denote a fixed Borel set with $\diam(M) < 1$. By definition, there exists some ball $B$ of radius $r = \diam(M)<1$ that contains $M$. Let $m\in \mathbb N$ be the unique integer satisfying $p^m \leq r < p^{m-1}$. By \Cref{lemma : well-defined and OSC} the collection $\{U^{u_1,...,u_m} : u_1,...,u_m\in \{0,1,2,3\}\}$ is disjoint, where we recall that $U$ is the interior of $V$. Furthermore, we have for each $u_1,...,u_m\in \{0,1,2,3\}$ that $V^{u_1,...,u_m} = \text{cl}(U)^{u_1,...,u_m} =  \text{cl}(U^{u_1,...,u_m})$, where $\text{cl}(\cdot)$ denotes taking closure with respect to the Euclidean norm $\lvert \,\cdot\, \rvert$.
    
    Note that $U$ is contained in a ball of radius $a_1 = \tfrac12$, while it contains a ball of radius $a_2 = \tfrac14\!\sqrt{4p-1}$. Hence, for each $u_1,...,u_m\in \{0,1,2,3\}$, the open rhombus $U^{u_1,...,u_m}$ is contained in a ball of radius $p^m a_1\leq ra_1$ and contains a ball of radius $p^ma_2>pra_2$. If we define the set $J = \{(u_1,...,u_m)\in \{0,1,2,3\}^m : B\cap V^{u_1,...,u_m} \neq \emptyset\}$, then it follows from \cite[Lemma 9.2]{falconer2013fractal} that $\#J$ is bounded by  
    $c = (1+2a_1)^2(p a_2)^{-2} > 0$. It follows that 
    \begin{align*}
        \widetilde \mu(M) 
        \leq \widetilde \mu(B\cap \mathcal C(\mathbf s)) 
        \leq \sum_{(u_1,...,u_m)\in J} \mu(I_{u_1,...,u_m}) 
        = \#J \cdot 4^{-m} 
        \leq cp^{m d} 
        \leq r^{d}c 
        = c\cdot \diam(M)^d.
    \end{align*}
    By the Mass Distribution Principle we conclude that $d \leq \dim_{\mathcal{H}}(\mathcal C(\mathbf s))$ and hence that $\dim_{\mathcal{H}}(\mathcal C(\mathbf s)) = d$.
\end{proof}

\section{Generalised \texorpdfstring{$p$}{p}-Koch snowflakes as quasicircles} \label{sec_quasicircle}

In this section we study conditions under which generalised $p$-Koch snowflakes are quasicircles, answering (Q2). A \textbf{Jordan curve} is a planar set homeomorphic to the unit circle $\mathbb{T} = \{ (x, y) \in \mathbb{R}^{2} : x^{2}+y^{2} = 1 \}$, and a \textbf{quasicircle} is a Jordan curve $C$ with \textbf{bounded turning}, meaning there exists a constant $K\in [1,\infty)$ such that for every $x,y\in C$ there exists a compact and connected subset $\Gamma \subseteq C$, also referred to as a \textbf{continuum}, containing both $x$ and $y$, and such that $\diam(\Gamma) \leq K\cdot \lvert x-y \rvert$.

We therefore begin by showing that in most cases, generalised $p$-Koch snowflakes are Jordan curves. To this end, we let $p\in (\tfrac14,\tfrac13]$ and we construct a natural parameterisation of any given generalised $p$-Koch snowflake. We begin by parameterising the generalised $p$-Koch curves. Let $\mathbf s \in \mathds S$ be fixed. For $t\in [0,1]$ we choose a sequence $\mathbf u_t = (u_k)_{k\in \mathbb N} \in \{0,1,2,3\}^{\mathbb N}$ satisfying $t = \sum_{k\in \mathbb N} u_k 4^{-k}$. That is, $\mathbf u_t$ consists of the digits of a base 4 number expansion for $t$. We then define $\rho_{\mathbf s}(t)$ to be $\pi(\omega(\mathbf u_t, \mathbf s))$. That is, if we write $\omega(\mathbf u_t,\mathbf s) = (\omega_k)_{k\in \mathbb N}$, then $\rho_{\mathbf s}(t)$ is the sole element of the set
\begin{align*}
	\bigcap_{k \in \mathbb{N}} \phi_{\omega_{1}} \circ \phi_{\omega_{2}} \circ \cdots \circ \phi_{\omega_{k}}(V).
\end{align*}
Using the fact that, for $z \in \{0,1\}^{\mathbb{N}}$,
    \begin{align}\label{eq:Lemma_3_1_intersection}
        \bigcap_{k \in \mathbb{N}} \phi_{(0, z_{1})} \circ \phi_{(0, z_{2})} \circ \cdots \circ \phi_{(0, z_{k})}(V) = \{ P_{-}\}
        \quad \text{and} \quad
        \bigcap_{k \in \mathbb{N}} \phi_{(3, z_{1})} \circ \phi_{(3, z_{2})} \circ \cdots \circ \phi_{(3, z_{k})}(V) = \{ P_{+}\},
    \end{align}
in tandem with \eqref{eq:end_pts_identicification}, an elementary calculation shows that if $\mathbf u = (u_k)_{k\in \mathbb N}$ and $\mathbf v = (v_k)_{k\in \mathbb N}$ both yield base 4 number expansions for $t$, that is $t = \sum_{k\in \mathbb N} u_k 4^{-k} = \sum_{k\in \mathbb N} v_k 4^{-k}$, then $\pi(\omega(\mathbf u,\mathbf s)) = \pi(\omega(\mathbf v, \mathbf s))$. This means that the map $\rho_{\mathbf s}:[0,1]\to \mathcal C(\mathbf s)$ is well-defined and independent on the choice of $\mathbf u_t$ for any given $t\in [0,1]$. 

\begin{lemma}\label{lemma : properties rho_s}
    Let $p\in (\tfrac14, \tfrac13]$ and let $\mathbf s\in \mathds S$ be fixed. The map $\rho_{\mathbf s}:[0,1]\to \mathcal C(\mathbf s)$ is continuous and surjective. Moreover, if $p\in (\tfrac14,\tfrac13)$, then $\rho_{\mathbf s}$ is injective. 
\end{lemma}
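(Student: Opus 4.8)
\textbf{Surjectivity} is immediate from the construction: recall that $\mathcal C(\mathbf s) = \pi(\Sigma(\mathbf s)) = \{\pi(\omega(\mathbf u, \mathbf s)) : \mathbf u \in \{0,1,2,3\}^{\mathbb N}\}$. Given any such $\mathbf u = (u_k)_{k\in\mathbb N}$, the number $t = \sum_{k\in\mathbb N} u_k 4^{-k}$ lies in $[0,1]$ and has $\mathbf u$ as a base-$4$ expansion, so $\rho_{\mathbf s}(t) = \pi(\omega(\mathbf u, \mathbf s))$; hence every point of $\mathcal C(\mathbf s)$ is attained. For \textbf{continuity} I would fix $k\in\mathbb N$ and show $\lvert\rho_{\mathbf s}(s) - \rho_{\mathbf s}(t)\rvert \le 2p^k$ whenever $\lvert s-t\rvert < 4^{-k}$, which suffices since $p<1$. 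Two numbers at distance less than $4^{-k}$ either lie in a common interval $[j4^{-k},(j+1)4^{-k}]$ or in two consecutive such intervals. In the first case both $s$ and $t$ admit base-$4$ expansions beginning with the length-$k$ word $(u_1,\dots,u_k)$ encoding $j$, so by the already-established well-definedness of $\rho_{\mathbf s}$, the points $\rho_{\mathbf s}(s)$ and $\rho_{\mathbf s}(t)$ both lie in the rhombus $\phi_{\omega_1}\circ\cdots\circ\phi_{\omega_k}(V)$, which has diameter $p^k$ since $\diam(V)=1$; thus $\lvert\rho_{\mathbf s}(s)-\rho_{\mathbf s}(t)\rvert\le p^k$. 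In the second case the shared endpoint $(j+1)4^{-k}$ has the two expansions $(u_1,\dots,u_k,3,3,\dots)$ and $(u_1',\dots,u_k',0,0,\dots)$, where $(u_1',\dots,u_k')$ encodes $j+1$; by \eqref{eq:Lemma_3_1_intersection} the point $z=\rho_{\mathbf s}((j+1)4^{-k})$ is simultaneously the image of $P_+$ under the first composition and of $P_-$ under the second, so $z$ lies in both level-$k$ rhombi, and the triangle inequality gives $\lvert\rho_{\mathbf s}(s)-\rho_{\mathbf s}(t)\rvert\le\lvert\rho_{\mathbf s}(s)-z\rvert+\lvert z-\rho_{\mathbf s}(t)\rvert\le 2p^k$.

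For \textbf{injectivity}, assume $p\in(\tfrac14,\tfrac13)$ and suppose $\rho_{\mathbf s}(s)=\rho_{\mathbf s}(t)=z$ with $s\ne t$. Fix base-$4$ expansions $\mathbf u$ of $s$ and $\mathbf v$ of $t$; since $s\ne t$ they differ, so there is a first index $k^*$ with $u_{k^*}\ne v_{k^*}$ and $u_j=v_j$ for $j<k^*$. Because the orientation attached to the $k^*$-th letter in \eqref{eq:omega_u_s} depends only on the shared prefix $u_1\cdots u_{k^*-1}$, the two level-$k^*$ rhombi containing $z$ are $\Phi(\phi_{(u_{k^*},b)}(V))$ and $\Phi(\phi_{(v_{k^*},b)}(V))$ for a common similarity $\Phi$ (the first $k^*-1$ maps) and a common orientation $b$. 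I would then establish the following \textbf{level-one claim}: for $p<\tfrac13$ and each $b\in\{0,1\}$, the closed rhombi $\phi_{(i,b)}(V)$ and $\phi_{(j,b)}(V)$ are disjoint when $\lvert i-j\rvert\ge 2$ and meet in exactly the single shared endpoint recorded in \eqref{eq:end_pts_identicification} when $\lvert i-j\rvert=1$. Granting this and the injectivity of $\Phi$, the case $\lvert u_{k^*}-v_{k^*}\rvert\ge2$ is impossible, as it would force $z$ into an empty intersection; and in the case $\lvert u_{k^*}-v_{k^*}\rvert=1$, say $v_{k^*}=u_{k^*}+1$, the point $z$ must be the shared vertex $\Phi(\phi_{(u_{k^*},b)}(P_+))=\Phi(\phi_{(v_{k^*},b)}(P_-))$. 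Using \eqref{eq:Lemma_3_1_intersection}, together with the fact (also a consequence of the claim, since $P_+$ lies in no level-one rhombus other than the last) that $P_\pm$ each have a unique address, the tail of $\mathbf u$ must be $333\cdots$ and that of $\mathbf v$ must be $000\cdots$; computing $s$ and $t$ from these expansions yields $s=t$, a contradiction.

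The \textbf{main obstacle}, and the only place where strictness $p<\tfrac13$ enters, is the level-one claim, specifically the pair $(i,j)=(1,2)$ whose rhombi share the apex $Q_+$. Here I would argue via extreme $x$-coordinates: the vertices of $\phi_{(1,0)}(V)$ recorded in \eqref{eq:end_pts_identicification} all satisfy $x\le 0$, with $x=0$ attained only at $Q_+$, precisely because $\tfrac{3p-1}{2}<0$ when $p<\tfrac13$; at $p=\tfrac13$ this vertex lies on the $y$-axis and the two rhombi instead share an entire edge, which is exactly why injectivity can fail there. Symmetrically $\phi_{(2,0)}(V)\subseteq\{x\ge0\}$ meets $\{x=0\}$ only at $Q_+$, so the two rhombi meet only at $Q_+$. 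The remaining pairs are dispatched by the same type of coordinate bounds (several of which, such as $(0,1)$ and $(1,3)$, again use $p<\tfrac13$), and the case $b=1$ follows from the case $b=0$ by reflecting in the $x$-axis, under which $V$ is invariant and $\phi_{(i,1)}$ is the conjugate of $\phi_{(i,0)}$.
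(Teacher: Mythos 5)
Your proposal is correct and follows essentially the same route as the paper's proof: surjectivity via base-$4$ expansions, continuity via the bound $2p^{k}$ on the union of two adjacent level-$k$ rhombi that share the image of the common endpoint (using \eqref{eq:Lemma_3_1_intersection}), and injectivity via the first index of disagreement together with the fact that, for $p<\tfrac13$, same-orientation level-one rhombi are disjoint unless adjacent, in which case they meet in the single shared vertex, forcing the $333\cdots$/$000\cdots$ tails and hence $s=t$. The only difference is that you explicitly state and prove this last geometric fact (your ``level-one claim'', via coordinate bounds on the vertices and reflection symmetry for $b=1$), which the paper asserts without proof; this is added detail rather than a different approach.
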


\begin{proof} 
Surjectivity follows from the fact that each sequence in $\{0,1,2,3\}^{\mathbf N}$ corresponds to the digits of a base~4 expansion of a number in $[0,1]$. For continuity, fix $t\in [0,1]$ and $\varepsilon > 0$. Let $m \in \mathbb{N}$ be such that $2 p^{m} < \varepsilon$, and suppose that $t' \in [0,1]$ satisfies $\lvert t - t' \rvert < 4^{-(m+1)}$.  Then there exists $k \in \{ 0, \dots, 4^{m}-1\}$ with either $t, t' \in [k \cdot 4^{-m}, (k+1) \cdot 4^{-m}]$, or $\max\{ t, t'\} \in [k \cdot 4^{-m}, (k+1) \cdot 4^{-m}]$ and $\min\{ t, t'\} \in [(k - 1) \cdot 4^{-m}, k \cdot 4^{-m}]$. Let $(u_{1}, \dots, u_{m})$ and $(v_{1}, \dots, v_{m})$ be the unique elements of $\{ 0,1,2,3\}^{m}$ with $k = \sum_{i = 0}^{m-1} u_{i} \cdot 4^{m-1-i}$ and $(k -1) = \sum_{i = 0}^{m-1} v_{i} \cdot 4^{m-1-i}$. Set $\omega_{1} = (u_{1}, s_{0}(0))$ and $\nu_{1} = (v_{1}, s_{0}(0))$, and for $j \in \{2, \dots, m\}$, set $\omega_{j} = (u_{j}, s_{j-1}(\sum_{i=1}^{j-1} u_{j} 4^{j-1-i}))$ and $\nu_{j} = (v_{j}, s_{j-1}(\sum_{i=1}^{j-1} v_{j} 4^{j-1-i}))$. By construction, we have that $\rho_{\mathbf s}(t)$ and $\rho_{\mathbf s}(t')$ belong to the set $(\phi_{\omega_1} \circ \cdots \circ \phi_{\omega_{m}}(V)) \cup (\phi_{\nu_1} \circ \cdots \circ \phi_{\nu_{m}}(V))$, and utilising \eqref{eq:Lemma_3_1_intersection}, that $\text{diam}((\phi_{\omega_1} \circ \cdots \circ \phi_{\omega_{m}}(V)) \cup (\phi_{\nu_1} \circ \cdots \circ \phi_{\nu_{m}}(V))) < 2 p^{m}$. This yields that $\lvert \rho_{\mathbf s}(t)-\rho_{\mathbf s}(t') \rvert \leq 2 p^{m} < \varepsilon$, and thus that $\rho_{\mathbf s}$ is continuous.

For the final statement, assume that $p\in (\tfrac14,\tfrac13)$. Suppose $t_1,t_2\in [0,1]$ with $t_{1} < t_{2}$ and $\rho_{\mathbf s}(t_1) = \rho_{\mathbf s}(t_2)$. Let $\mathbf u = (u_k)_{k\in \mathbb N}, \mathbf v = (v_k)_{k\in \mathbb N} \in \{0,1,2,3\}^{\mathbb N}$ be such that $t_1 = \sum_{k\in \mathbb N} u_k4^{-k}$ and $t_2 = \sum_{k\in \mathbb N} v_k 4^{-k}$. If $\mathbf u = \mathbf v$, or if there exists $m \in \mathbb{N}$ with $(u_{1}, \dots, u_{m-1}) = (v_{1}, \dots, v_{m-1})$, $u_{m}=v_{m} - 1$ and $u_{j} = 3 = v_{j}+3$ for all integers $j > m$, then $t_1 = t_2$.  Therefore, we may assume that neither of these two cases occur.  Further, observe that if $\eta_k = \omega_k(\mathbf u, \mathbf s)$ and $\xi_k = \omega_k(\mathbf{v}, \mathbf{s})$, then $\eta_k = \xi_k$ for all natural numbers $k\leq m-1$, and $\eta_m \neq \xi_m$, where $m = \min\{k\in \mathbb N : u_k \neq v_k\}$. By construction, $\rho_{\mathbf s}(t_1) \in \phi_{\eta_1} \circ \cdots \circ \phi_{\eta_{m-1}} \circ \phi_{\eta_m}(V)$ while $\rho_{\mathbf s}(t_2) \in \phi_{\eta_1} \circ \cdots \circ \phi_{\eta_{m-1}} \circ \phi_{\xi_m}(V)$. However, since we have $p\in (\tfrac13,\tfrac14)$, the intersection $\phi_{\eta_m}(V)\cap \phi_{\xi_m}(V)$ is only non-empty if $v_m = u_m - 1$ or $v_m = u_m + 1$. Since we have assumed that $t_{1} < t_{2}$, we have the latter of these two situations. Moreover, in this case we have $\phi_{\eta_m}(V)\cap \phi_{\xi_m}(V) = \{\phi_{\eta_m}(P_+)\} = \{\phi_{\xi_m}(P_{-})\}$. Hence $\rho_{\mathbf s}(t_1)=\rho_{\mathbf{s}}(t_2)$ implies for any arbitrary $x_0 \in V$ that $\lim_{k\to \infty} \phi_{\eta_{m+k}}(x_0) = P_{+}$, which implies that $u_k = 3$ for all $k\geq m+1$, while $\lim_{k\to \infty} \phi_{\xi_{m+k}}(x_0) = P_{-}$, which implies that $v_k = 0$ for each $k\geq m+1$. However, at the start of our proof of injectivity, we assumed this not to be the case.
\end{proof}

Note, in the case when $\mathbf{s} = ((s_k(0),s_k(1),...,s_k(4^{k}-1)))_{k = 0}^\infty \in \mathds S$ is such that $s_k(j) = 0$ for all $j \in \{ 0, \dots,  4^{k}-1\}$ and $k \in \mathbb{N}_{0}$, and $p = 1/3$, a similar argument will show that $\rho_{\mathbf s}$ is injective.  Further, observe that as $t$ increases from $0$~to~$1$ the parameterisation $\rho_{\mathbf s}$ traverses the generalised $p$-Koch curve $\mathcal C(\mathbf s)$ from $P_-$ to $P_+$.

Recall that the generalised $p$-Koch snowflakes are defined as the sets $\mathcal F(\mathbf s,\mathbf t, \mathbf r) = \mathcal C(\mathbf s)\cup f_1(\mathcal C(\mathbf t)) \cup f_2(\mathcal C(\mathbf r))$ for $\mathbf s, \mathbf t, \mathbf r \in \mathds S$, with $f_1$ and $f_2$ as defined in \eqref{eq:translation_maps}. A natural choice of parameterisation of the snowflake $\mathcal F = \mathcal F(\mathbf s, \mathbf t, \mathbf r)$ is therefore given by the map $\gamma: [0,1] \to \mathcal F$ defined by
\begin{align} \label{eq : parameterisation}
    \gamma(t) =
            \begin{cases}
            \rho_{\mathbf{s}}(1-3t) & \text{if} \, t \in [0,\tfrac13),\\
            f_{2}(\rho_{\mathbf{r}}(2-3t)) & \text{if} \, t \in [\tfrac13, \tfrac23),\\
            f_{1}(\rho_{\mathbf{t}}(3-3t)) & \text{if} \, t \in [\tfrac23, 1].
            \end{cases}
    \end{align}

    \begin{prop}\label{prop : Jordan curve}
    Fix $p\in (\tfrac14,\tfrac13]$ and $\mathbf s, \mathbf r, \mathbf t \in \mathds S$ and let $\mathcal F = \mathcal F(\mathbf s, \mathbf t, \mathbf r)$ be the corresponding generalised $p$-Koch snowflake. If the restriction of the parameterisation $\gamma:[0,1]\to \mathcal F$ to $[0,1)$ is injective, then $\mathcal F$ is a Jordan curve. In particular, this is the case whenever $p \in (\tfrac14,\tfrac13)$. 
    \end{prop}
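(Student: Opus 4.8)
The plan is to realise $\mathcal F$ as the continuous injective image of the circle $\mathbb T$, and then invoke the standard fact that a continuous bijection from a compact space onto a Hausdorff space is a homeomorphism, so that it suffices to upgrade $\gamma$ to such a map. First I would record the endpoint identifications coming from \eqref{eq:translation_maps}: a direct computation gives $f_2(P_+) = P_-$, $f_1(P_+) = f_2(P_-) = (0,-\tfrac{\sqrt3}{2})^\top =: W$ and $f_1(P_-) = P_+$, so that the three arcs $\rho_{\mathbf s}$, $f_2 \circ \rho_{\mathbf r}$ and $f_1 \circ \rho_{\mathbf t}$ join up consecutively at the vertices $P_-, W, P_+$ of the equilateral triangle they subtend. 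By \Cref{lemma : properties rho_s} and the continuity of the affine maps $f_1, f_2$, the map $\gamma$ is continuous on each of $[0,\tfrac13]$, $[\tfrac13,\tfrac23]$, $[\tfrac23,1]$; recalling that $\rho_{\mathbf q}$ runs from $P_-$ to $P_+$ as its argument runs from $0$ to $1$, the one-sided limits agree at $\tfrac13$ and $\tfrac23$, and $\gamma(0) = \rho_{\mathbf s}(1) = P_+ = f_1(P_-) = \gamma(1)$. Hence $\gamma$ is continuous on $[0,1]$ with $\gamma(0) = \gamma(1)$.

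For the first assertion, assume $\gamma|_{[0,1)}$ is injective. The equality $\gamma(0) = \gamma(1)$ means $\gamma$ factors as $\gamma = \hat\gamma \circ q$, where $q : [0,1] \to [0,1]/{\sim}\, \cong \mathbb T$ is the quotient identifying $0$ with $1$ and $\hat\gamma : \mathbb T \to \mathcal F$ is continuous. Injectivity of $\gamma$ on $[0,1)$ together with $\gamma(0) = \gamma(1)$ shows $\hat\gamma$ is injective, while surjectivity of each $\rho$ from \Cref{lemma : properties rho_s} shows $\gamma$, and hence $\hat\gamma$, maps onto $\mathcal F = \mathcal C(\mathbf s) \cup f_1(\mathcal C(\mathbf t)) \cup f_2(\mathcal C(\mathbf r))$. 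Thus $\hat\gamma$ is a continuous bijection from the compact space $\mathbb T$ onto the Hausdorff space $\mathcal F$, so it is a homeomorphism and $\mathcal F$ is a Jordan curve.

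It remains to verify that $\gamma|_{[0,1)}$ is injective when $p \in (\tfrac14,\tfrac13)$. Reading off \eqref{eq : parameterisation}, on $[0,1)$ the three half-open pieces have images $\mathcal C(\mathbf s) \setminus \{P_-\}$ (with $P_+$ attained at $t = 0$), $f_2(\mathcal C(\mathbf r)) \setminus \{W\}$ (with $P_-$ attained at $t = \tfrac13$), and $f_1(\mathcal C(\mathbf t)) \setminus \{P_+\}$ (with $W$ attained at $t = \tfrac23$). Each piece is injective, being the composition of the injective map $\rho$ from \Cref{lemma : properties rho_s} (this is where $p<\tfrac13$ is used) with an injective affine reparameterisation and an affine isometry. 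So injectivity of $\gamma|_{[0,1)}$ reduces to the pairwise disjointness of the three images, for which it suffices to show the three curves meet only at the triangle vertices, namely $\mathcal C(\mathbf s) \cap f_2(\mathcal C(\mathbf r)) = \{P_-\}$, $\mathcal C(\mathbf s) \cap f_1(\mathcal C(\mathbf t)) = \{P_+\}$ and $f_1(\mathcal C(\mathbf t)) \cap f_2(\mathcal C(\mathbf r)) = \{W\}$; deleting the appropriate vertex from one side of each intersection, as dictated by the half-open pieces above, then renders the images disjoint.

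Since $\mathcal C(\mathbf q) \subset V$ for every $\mathbf q$, it is enough to prove the stronger statement that the rhombi meet only at the shared vertex, i.e. $V \cap f_2(V) = \{P_-\}$, $V \cap f_1(V) = \{P_+\}$ and $f_1(V) \cap f_2(V) = \{W\}$, the three being equivalent by the threefold symmetry of the construction. This is the crux, and the step I expect to be the main obstacle. I would prove it by a tangent-cone computation at the shared vertex: $V$ and $f_2(V)$ are convex and share $P_-$, and comparing the edge directions of $V$ at $P_-$ (spanning the angular sector of half-angle $\arctan(\sqrt{4p-1})$ about the positive $x$-axis) with those of $f_2(V)$ at $P_-$ (obtained from $f_2(Q_\pm)$), one checks that the two sectors meet only at their common apex precisely when $p < \tfrac13$; a separating line through $P_-$ then forces $V \cap f_2(V) = \{P_-\}$. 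The value $p = \tfrac13$ is exactly the degenerate case, since there $f_2(Q_-) = Q_-$ and the rhombi share the entire edge $P_-Q_-$; this is why the clean conclusion is restricted to the open interval and why at $p = \tfrac13$ the injectivity of $\gamma|_{[0,1)}$ must be imposed as a hypothesis rather than being automatic.
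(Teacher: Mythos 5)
Your proposal is correct and takes essentially the same route as the paper's proof: continuity and endpoint matching of $\gamma$ via \Cref{lemma : properties rho_s}, the compactness-of-the-circle argument to pass from injectivity on $[0,1)$ to the Jordan-curve conclusion, and, for $p\in(\tfrac14,\tfrac13)$, piecewise injectivity combined with the fact that $V$, $f_1(V)$ and $f_2(V)$ meet pairwise only in the shared triangle vertices. The only difference is one of detail: you supply a (correct) tangent-cone argument for the rhombi-intersection fact, which the paper asserts without proof, and you correctly pinpoint the edge-sharing degeneracy $f_2(Q_-)=Q_-$ at $p=\tfrac13$ as the reason injectivity must be hypothesised there.
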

    
    \begin{proof}
     We have $\gamma(\tfrac13) = f_2(\rho_{\mathbf r}(1)) = f_2(P_+) = P_- = \rho_{\mathbf s}(0) = \rho_{\mathbf s}(1-3\cdot \tfrac13)$ and $\gamma(\tfrac23) = f_1(P_+) = f_2(P_-) = f_2(\rho_{\mathbf r}(2-3\cdot \tfrac23))$. Since $f_1$ and $f_2$ are continuous, it therefore follows from \Cref{lemma : properties rho_s} that $\gamma$ is continuous and surjective.   
     Furthermore, we have $\gamma(0) = P_+ = \gamma(1)$. By the compactness of the unit circle, it follows that if $\gamma$ is injective on $[0,1)$, then $\mathcal{F}$ is a Jordan curve.

     Fixing $p\in (\tfrac14,\tfrac13)$, by \Cref{lemma : properties rho_s}, the restrictions of $\gamma$ to the intervals $[0,\tfrac13)$, $[\tfrac13,\tfrac23)$ and $[\tfrac23,1)$ are each injective. Since in this case we also have that $V\cap f_1(V) = \{P_+\}$, $f_1(V)\cap f_2(V) = \{f_1(P_+)\} = \{f_{2}(P_-)\}$ and $f_2(V)\cap V = \{P_-\}$, we see that $\gamma$ is also injective on $[0,1)$, and so, in this case, $\mathcal{F}$ is a Jordan curve.
    \end{proof}

    The parameterisation $\gamma$ of $\mathcal F$ is now chosen such that, going from $t=0$ to $t=1$, one traverses $\mathcal F$ in a counter-clockwise direction, starting and ending in $P_+$. 
    Note that if $p = \tfrac13$, there exist cases where $\gamma$ is injective, and hence $\mathcal F$ is a Jordan curve, and cases where it is not. For an example of the former, consider the classical Koch snowflake, whose parameterisation is injective, and for the latter, consider the Koch anti-snowflake, which satisfies $\gamma(\tfrac16) = \gamma(\tfrac12) = \gamma(\tfrac56) = (0, - \tfrac{1}{4} \sqrt{3})^{\top}$. In fact, the parameterisation of the anti-snowflake in this case can be seen to have infinitely many points where it fails to be injective, \Cref{fig:snowflake and antisnowflake} illustrates this phenomenon.
    
    Thus, if a generalised $p$-Koch snowflake does not self-intersect, then it is a Jordan curve. It remains to show that, when this is the case, the snowflake has bounded turning, and is therefore a quasicircle, answering (Q2).

    \begin{theorem}
        For $p\in (\tfrac14,\tfrac13]$ and $\mathbf s, \mathbf t, \mathbf r\in \mathds S$, if the corresponding parameterisation $\gamma$ of  the generalised $p$-Koch snowflake $\mathcal F = \mathcal F(\mathbf s, \mathbf t, \mathbf r)$ from \eqref{eq : parameterisation} is injective on $[0,1)$, then $\mathcal F$ is of bounded turning. 
\end{theorem}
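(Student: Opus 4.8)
The plan is to establish the arc--chord (bounded turning) condition first for a single generalised $p$-Koch curve, with a constant $K_0 = K_0(p)$ that is uniform in $\mathbf s \in \mathds S$, and then to glue the three curves together at the vertices of the underlying equilateral triangle. Since $\mathcal F$ is a Jordan curve (by \Cref{prop : Jordan curve}), given $x,y \in \mathcal F$ I am free to choose, as the continuum $\Gamma$, whichever of the two sub-arcs of $\mathcal F$ between $x$ and $y$ is convenient: if $x$ and $y$ lie on the same one of $\mathcal C(\mathbf s)$, $f_1(\mathcal C(\mathbf t))$, $f_2(\mathcal C(\mathbf r))$ I take $\Gamma$ inside that curve, and otherwise I take the arc through the single triangle vertex shared by the two relevant sides (recall $f_1,f_2$ are arranged so that any two of the three sides meet in exactly one of $P_+$, $P_-$, $f_1(P_+)=f_2(P_-)$). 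As $f_1,f_2$ are isometries, it suffices to produce $K_0$ so that $\diam(\Gamma') \le K_0\lvert x-y\rvert$ whenever $x,y \in \mathcal C(\mathbf s)$ and $\Gamma'$ is the sub-arc of $\mathcal C(\mathbf s)$ between them.

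For the single-curve estimate I would argue by scale. Given distinct $x,y \in \mathcal C(\mathbf s)$, let $n\ge 0$ be the largest integer for which $x$ and $y$ lie in a common level-$n$ rhombus $V^{u_1,\dots,u_n}$, so that at level $n+1$ they lie in distinct children. As $\phi_{\omega_1}\circ\cdots\circ\phi_{\omega_n}$ is a similarity of ratio $p^n$ and both $\diam(\Gamma')$ and $\lvert x-y\rvert$ scale by $p^n$, the ratio $\diam(\Gamma')/\lvert x-y\rvert$ is invariant under pulling back by this map. Hence it is enough to bound the ratio when $x$ and $y$ lie in two distinct level-$1$ pieces $\phi_{(i,b)}(V)$ and $\phi_{(j,b)}(V)$ of a generalised $p$-Koch curve, with $i<j$ and $b\in\{0,1\}$. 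If these are non-adjacent, i.e.\ $j\ge i+2$, then the coordinate bounds in \Cref{lemma : well-defined and OSC} show the two rhombi lie at a positive distance $c_0=c_0(p)>0$, so $\lvert x-y\rvert\ge c_0$ while $\diam(\Gamma')\le\diam(V)=1$, giving ratio at most $1/c_0$.

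The decisive case is two \emph{adjacent} pieces ($j=i+1$) sharing the vertex $v=\phi_{(i,b)}(P_+)=\phi_{(i+1,b)}(P_-)$ from \eqref{eq:end_pts_identicification}. Here $\Gamma'$ splits at $v$ into an arc from $x$ to $v$ inside $\phi_{(i,b)}(V)$ and an arc from $v$ to $y$ inside $\phi_{(i+1,b)}(V)$. On the $i$-side let $\ell$ be the largest integer with $x$ in the nested ``spine'' piece $\phi_{(i,b)}\circ\psi_3^{\ell}(V)$, which retains $v$ as its endpoint (on the $(i+1)$-side use $\psi_0^{\ell}$). Since $\psi_3$ fixes $P_+$ and is a pure homothety, all these spine pieces share one cone at $v$, and a direct computation gives $\diam(\text{arc from }x\text{ to }v)\le p^{\ell+1}$ while $\lvert x-v\rvert\ge c_1 p^{\ell+1}$ (as $x$ lies outside the next spine piece). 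This bounds each half-arc diameter by a fixed multiple of its chord to $v$ \emph{without} any circular appeal to $K_0$, so $\diam(\Gamma')\le c_1^{-1}(\lvert x-v\rvert+\lvert v-y\rvert)$, and it remains only to prove the angle estimate $\lvert x-v\rvert+\lvert v-y\rvert\le C\lvert x-y\rvert$, equivalently $\angle xvy\ge\theta_0>0$. Both $v\to x$ and $v\to y$ lie in the cones subtended at $v$ by the two rhombi; since $V$ subtends a cone of half-angle $\arctan\sqrt{4p-1}$ at each of $P_\pm$, tracking the rotations in the $\phi$'s shows that for $p\in(\tfrac14,\tfrac13)$ these cones are disjoint with an angular gap bounded below, yielding $\theta_0>0$ and hence $K_0<\infty$ in this range (consistent with \Cref{prop : Jordan curve}).

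The main obstacle is the borderline $p=\tfrac13$: at the apex vertices the two cones become tangent (their common boundary being the shared edge of the two rhombi), so the purely metric cone argument degenerates. I expect to close this using the injectivity hypothesis: a failure of the angle bound would force sequences $x_n,y_n\to v$ whose directions into the two pieces both approach the tangent direction, i.e.\ curve points of the two pieces accumulating on their common edge; passing to a limit and invoking that $\gamma$ is injective on $[0,1)$ (so $\mathcal F$ has no self-contacts) rules this out, just as the classical Koch curve---whose pieces meet their common edge only at $v$---avoids it. Finally, with $K_0<\infty$ in hand the cross-curve case is non-circular: if $x,y$ lie on two sides meeting at a triangle vertex $w$, then $\Gamma$ splits at $w$ into two arcs, each inside a single curve, so $\diam(\Gamma)\le K_0(\lvert x-w\rvert+\lvert y-w\rvert)$, and the same cone analysis applied to the two sides---now separated by the interior angle $60^\circ$ of the equilateral triangle (again disjoint for $p<\tfrac13$, and handled via injectivity at $p=\tfrac13$)---gives $\lvert x-w\rvert+\lvert y-w\rvert\le C'\lvert x-y\rvert$. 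Taking $K=\max\{1/c_0,\ c_1^{-1}C,\ K_0 C'\}$ then witnesses that $\mathcal F$ is of bounded turning.
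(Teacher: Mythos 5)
Your overall scaffolding (reduce to a single curve, rescale to the first level where $x$ and $y$ separate, treat non-adjacent children by a uniform gap, bound each half-arc by its chord via the spine pieces) is sound, and for $p\in(\tfrac14,\tfrac13)$ your cone-gap argument is a legitimate, arguably cleaner, alternative to the paper's explicit worst-case trigonometry. The gap is at $p=\tfrac13$, which the theorem covers and which is the delicate case. There are two problems. First, your claimed equivalence between the chord inequality $\lvert x-v\rvert+\lvert v-y\rvert\le C\lvert x-y\rvert$ and the angle bound $\angle xvy\ge\theta_0>0$ is false: the angle bound implies the chord bound, but the chord bound can also hold with angle exactly zero, provided $\lvert x-v\rvert$ and $\lvert y-v\rvert$ are multiplicatively separated. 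Second, and decisively, the angle bound itself is false at $p=\tfrac13$ even under the injectivity hypothesis, so no limiting argument can recover it. Concretely, at $p=\tfrac13$ the rhombi $\psi_1(V)$ and $\psi_2(V)$ intersect in an entire common edge (the vertical segment from $Q_+$ down to $\psi_1(Q_-)=\psi_2(Q_-)$), not just in the vertex $Q_+$. Now choose the orientation data of $\mathcal C(\mathbf s)$ so that inside $\psi_1(V)$ the only downward bump is the one assigned to the cell $\psi_1\psi_3^{k}$; then $x=\psi_1\psi_3^{k}(Q_-)$ is a curve point lying on this common edge at distance $p^{k+1}\sqrt{p}$ below $Q_+$. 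Inside $\psi_2(V)$ let the only downward bump be the one assigned to $\psi_2\psi_0^{k+1}$; then $y=\psi_2\psi_0^{k+1}(Q_-)$ is a curve point on the same edge at distance $p^{k+2}\sqrt{p}$ below $Q_+$. The points $x$, $y$, $Q_+$ are collinear, so $\angle x\,Q_+\,y=0$; yet the two pieces meet only at $Q_+$ (they can meet only along the common edge, where they occupy different depths), and the snowflake remains a Jordan curve. Your compactness argument cannot close this, because the offending configurations sit at the vertex $v$ itself, where the two arcs are permitted to meet: two arcs may share a tangent direction, indeed lie on a common line, at a common endpoint without ever intersecting.

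What actually rescues bounded turning at $p=\tfrac13$ is scale separation rather than angular separation, and this is precisely what the paper's proof encodes. Injectivity forces the sets of depths at which the two pieces touch the common edge to be disjoint: if both pieces had a point whose first ``switch'' onto the edge occurs at depth $k$, then both would contain the point $\psi_1\psi_3^{k}(Q_-)=\psi_2\psi_0^{k}(Q_-)$, contradicting injectivity. Disjoint depth sets yield $\lvert x-y\rvert\ge\tfrac13\max\{\lvert x-Q_+\rvert,\lvert y-Q_+\rvert\}$ even when the angle vanishes. The paper's device captures this automatically: it takes $n$ to be the maximal level at which the rhombi containing $x$ and $y$ still have non-empty intersection, bounds $\diam(\Gamma)\le 2p^{n}$, and bounds $\lvert x-y\rvert$ below by the minimal distance between \emph{disjoint} level-$(n+1)$ rhombi (for $p=\tfrac13$, the height $\tfrac12 p^{n+1}$ of an equilateral triangle of side $p^{n+1}\sqrt{p}$); disjointness of those rhombi is exactly the geometric expression of the injectivity-enforced scale separation. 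To repair your proof you would have to replace the angle estimate by such a separation argument at $p=\tfrac13$, both in the adjacent-children case and in your cross-curve case at the triangle vertices, where the cones of the two sides are likewise tangent at $p=\tfrac13$.
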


    \begin{proof}
        Let $x,y\in \mathcal F$ with $x \neq y$; indeed, if $x=y$, we can consider $\Gamma = \{x\}$, and hence $\diam(\Gamma) = 0 = \lvert x-y \rvert$. Recall that $\mathcal F$ is the union of the three generalised $p$-Koch curves $\mathcal C(\mathbf s)$, $f_1(\mathcal C(\mathbf r))$ and $f_2(\mathcal C(\mathbf t))$, where we note that $f_1$ and $f_2$ are isometries. We begin with the case where $x$ and $y$ are both contained in the same generalised $p$-Koch curve. Without loss of generality, we can assume that $x,y\in \mathcal C(\mathbf s)$. 
        
        As before, for $k\in \mathbb N$ and $(u_1,...,u_k)\in \{0,1,2,3\}^{k}$, let $V^{u_1,...,u_k}$ denote the rhombus
            \begin{align*}
            \phi_{(u_1,s_0(0))} \circ \phi_{(u_2,s_1(u_1))}\circ\cdots \circ \phi_{(u_{k},s_{k-1}(\sum_{j=1}^{k-1} u_j 4^{k-1-j}))}(V).
            \end{align*}
        For $k=0$ we have $\{0,1,2,3\}^0 = \{\varnothing\}$ where $\varnothing$ denotes the empty word, and set $V^\varnothing = V$. We then write 
        \begin{align*}
            m = \sup\left\{ k \in \mathbb N_0 : x,y \in V^{u_1,...,u_k} \text{ for some } (u_1,...,u_k) \in \{0,1,2,3\}^k\right\}.
        \end{align*}
        We always have $x,y \in V = V^\varnothing$, and so $m \geq 0$, as the set we take the supremum over is non-empty. Furthermore, the supremum is always attained by some integer $k\in \mathbb N_0$ as otherwise there would exist some $(u_k)_{k\in \mathbb N} \in \{0,1,2,3\}^{\mathbb N} $ such that $x$ and $y$ are both contained in the singleton $\bigcap_{k\in \mathbb N} V^{u_1,...,u_k}$, meaning $x=y$. Let $(u_1,...,u_m)\in \{0,1,2,3\}^m$ be such that $x,y\in V^{u_1,...,u_m}$. We now consider two different cases: (i) $x$ and $y$ lie in two adjacent rhombi at level $m+1$, say $x \in V^{u_1,...,u_m,i}$ and $y\in V^{u_1,...,u_m,i+1}$ for some $i\in \{0,1,2\}$, or (ii) either $x\in V^{u_1,...,u_m,0}$ and $y\in V^{u_1,...,u_m,2}\cup V^{u_1,...,u_m,3}$, or $x\in V^{u_1,...,u_m,1}$ and $y\in V^{u_1,...,u_m,3}$.

        We begin with Case (ii). In this case at least one of the rhombi $V^{u_1,...,u_m, 1}$ and $V^{u_1,...,u_m,2}$ lies `in between' $x$ and $y$, giving us the lower bound $\lvert x-y \rvert \geq p^{m}(\tfrac12-p)$, see \Cref{fig : case(ii) rhombus}. By construction there is a subarc $\Gamma$ of $\mathcal C(\mathbf s)$ with endpoints $x$ and $y$ that is entirely contained within $V^{u_1,...,u_m}$, and so $\diam(\Gamma) \leq \diam(V^{u_1,...,u_m}) = p^m$. Therefore, $\diam(\Gamma) \leq K_1 \, \lvert x-y \rvert$ with $K_1=2/(1-2p)$.

        \tikzmath{\x = 1/(2 * sqrt(6)); \y = (7/24) * \x; \z = (1 - 7/24) * \x; } 
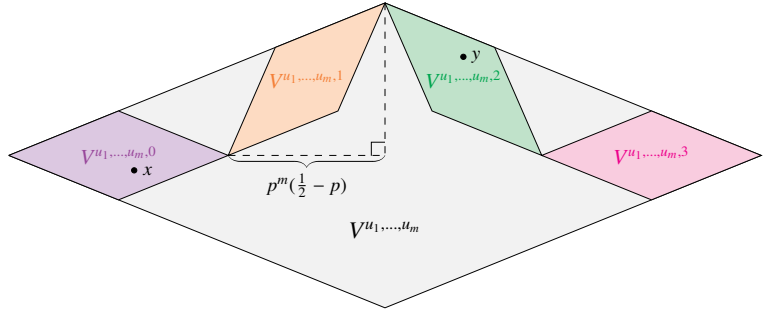
\begin{figure}[ht]
    \centering
    \scalebox{0.90}{
    \begin{tikzpicture}[scale = 11]
        \draw[black,fill=gray!10, -] (-1/2,0) -- (0,\x) -- (1/2,0) -- (0, -\x) -- (-1/2,0); %V
        \node at (0,-1/10) {\color{black}$V^{u_1,...,u_m}$};
        \filldraw[fill=Purple!25] (-1/2,0) -- (-17/48,\y) -- (7/24 - 1/2,0) -- (-17/48, -\y) -- (-1/2,0); %phi_(0,0)(V)
        \node at (7/48-1/2,0) {\small{\color{Purple}$V^{u_1,...,u_m,0}$}};
        \filldraw[fill=Orange!25] (7/24 - 1/2,0) -- (-7/48,\z) -- (0, \x) -- (-1/16, \y) -- (7/24 - 1/2,0); %phi_(1,0)(V)
         \node at (-5/48,\x/2) {\small{\color{Orange}$V^{u_1,...,u_m,1}$}};
         \filldraw[fill=Green!25] (0, \x) -- (7/48,\z) -- (1/2 - 7/24,0) -- (1/16, \y) -- (0, \x); %phi_(2,0)(V)
        \node at (5/48,\x/2) {\small{\color{Green}$V^{u_1,...,u_m,2}$}};
         \filldraw[fill=magenta!25] (1/2 - 7/24,0) -- (17/48,\y) -- (1/2,0) -- (17/48, -\y) -- (1/2 - 7/24,0); %phi_(3,0)(V)
        \node at (1/2-7/48,0) {\small{\color{magenta} $V^{u_1,...,u_m,3}$}};
        
        \draw[dashed] (7/24 - 1/2,0) -- (0,0); 
        \draw [decorate,
        decoration = {calligraphic brace,amplitude=5pt,}] (0,0) --  (7/24 - 1/2,0);
        \draw[dashed] (0,\x) -- (0,0); 

        \node at (7/48 - 1/4,-0.04) {\footnotesize{$p^{m}(\tfrac12-p)$}};

        \filldraw[black] (8/48-1/2,-0.02) circle (0.1pt) node[anchor=west]{\footnotesize{$x$}};
        \filldraw[black] (5/48,\x/2 + 0.03) circle (0.1pt) node[anchor=west]{\footnotesize{$y$}};
        
        \coordinate (A) at (0,0);
        \coordinate (B) at (0,-0.0005);
        \coordinate (C) at (-0.0005,-0.0005);
        \pic [draw, " ", angle radius=0.2cm] {right angle = A--B--C};
    
    \end{tikzpicture}}
    \caption{An example of the rhombus $V^{u_1,...,u_m}$ in Case (ii).}
    \label{fig : case(ii) rhombus}
\end{figure}

\noindent We now move to Case (i), where $x \in V^{u_1,...,u_m,i}$ and $y\in V^{u_1,...,u_m,i+1}$ for some $i\in \{0,1,2\}$. In this case we let $n\geq m$ be the maximum of all $k\in \mathbb N$ for which $x$ and $y$ lie in rhombi with non-empty intersection at level $n$. Such an $n$ exists, as otherwise $x=y$. As we assume that $\rho_{\mathbf s}$ is injective there exists a continuum $\Gamma$ containing $x$ and $y$ that is entirely contained within these two adjacent level $n$ rhombi, which are both of diameter $p^n$, and hence $\diam(\Gamma) \leq 2p^n$.

Lower bounds for the distance $\lvert x-y \rvert$ are then found by considering the worst case, namely when the smallest distance between two disjoint level $n+1$ rhombi is minimal, which is different for the two cases $p = \frac13$ and $p\in (\frac14,\frac13)$, see for instance \Cref{fig : case(i) rhombi}. 

\tikzmath{\p = 1/(2*sqrt(3));  } 
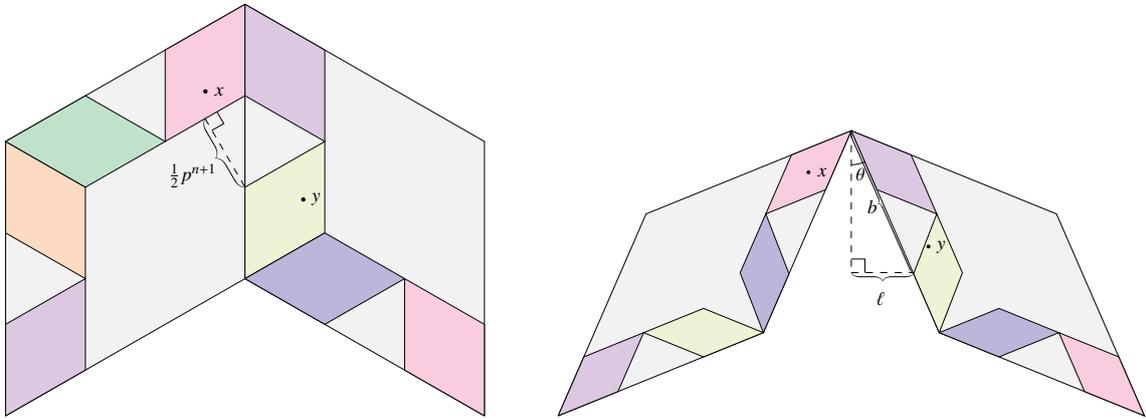
\begin{figure}[h!]
    \centering
    \scalebox{0.90}{
    \begin{tikzpicture}[scale = 7]

        \begin{scope}[rotate=60]
        
        \draw[black,fill=gray!10, -] (0,0) -- (-1/2, \p) -- (-1,0) -- (-1/2, -\p) -- (0,0); 
        \filldraw[fill=Purple!25] (-1,0) -- (-5/6, \p/3) -- (-2/3,0) -- (-5/6, -\p/3) -- (-1,0);
        \filldraw[fill=Orange!25] (-2/3,0) -- (-2/3, 2*\p/3) -- (-1/2,\p) -- (-1/2,\p - 2*\p/3) -- (-2/3,0);
        \filldraw[fill=Green!25] (-1/3,0) -- (-1/3, 2*\p/3) -- (-1/2,\p) -- (-1/2,\p - 2*\p/3) -- (-1/3,0);
        \filldraw[fill=magenta!25] (0,0) -- (-1/6, \p/3) -- (-1/3,0) -- (-1/6, -\p/3) -- (0,0);
        \filldraw[black] (-1/5,-0.02) circle (0.1pt) node[anchor=west]{\footnotesize{$x$}};
        \draw[dashed] (-1/4, -\p/6) -- (-1/3, -2*\p/3);
        \draw [decorate,
        decoration = {calligraphic brace,amplitude=4.5pt,}] (-1/3, -2*\p/3) --  (-1/4, -\p/6);
        \coordinate (A) at (-1/3, -2*\p/3);
        \coordinate (B) at (-1/4, -\p/6);
        \coordinate (C) at (-1/6, -\p/3);
        \pic [draw, " ", angle radius=0.2cm] {right angle = A--B--C};
        \end{scope}
        \node at (-0.11,-0.36) {\footnotesize{$\frac12 p^{n+1}$}};

        \begin{scope}[rotate=-60]
        \draw[black,fill=gray!10, -] (0,0) -- (1/2, \p) -- (1,0) -- (1/2, -\p) -- (0,0); 
        \filldraw[fill=Purple!25] (0,0) -- (1/6, \p/3) -- (1/3,0) -- (1/6, -\p/3) -- (0,0);
        \filldraw[fill=SpringGreen!25] (1/3,0) -- (1/3, -2*\p/3) -- (1/2,-\p) -- (1/2,-\p + 2*\p/3) -- (1/3,0);
        \filldraw[black] (5/12,-0.1) circle (0.1pt) node[anchor=west]{\footnotesize{$y$}};
        \filldraw[fill=Blue!25] (2/3,0) -- (2/3, -2*\p/3) -- (1/2,-\p) -- (1/2,-\p + 2*\p/3) -- (2/3,0);
        \filldraw[fill=magenta!25] (1,0) -- (5/6, \p/3) -- (2/3,0) -- (5/6, -\p/3) -- (1,0);   
        \end{scope}
    \end{tikzpicture}} \hspace{2em}
    \scalebox{0.90}{
    \begin{tikzpicture}[scale = 6]
        \begin{scope}[rotate=44.42]
        \draw[black,fill=gray!10,-] (-1/2-1/2,0) -- (0-1/2,\x) -- (1/2-1/2,0) -- (0-1/2, -\x) -- (-1/2-1/2,0); 
        \filldraw[fill=Purple!25] (-1/2-1/2,0) -- (-17/48-1/2,\y) -- (7/24 - 1/2-1/2,0) -- (-17/48-1/2, -\y) -- (-1/2-1/2,0); 
        \filldraw[fill=SpringGreen!25] (7/24 - 1/2-1/2,0) -- (-7/48-1/2,-\z) -- (0-1/2, -\x) -- (-1/16-1/2, -\y) -- (7/24 - 1/2-1/2,0); 
        \filldraw[fill=Blue!25] (0-1/2, -\x) -- (7/48-1/2,-\z) -- (1/2 - 7/24-1/2,0) -- (1/16-1/2, -\y) -- (0-1/2, -\x); 
        \filldraw[fill=magenta!25] (- 7/24,0) -- (17/48-1/2,\y) -- (0,0) -- (17/48-1/2, -\y) -- (- 7/24,0); 
        \filldraw[black] (-7/48,0) circle (0.1pt) node[anchor=west]{\footnotesize{$x$}};
        \end{scope}

        \begin{scope}[rotate=-44.42]
        \draw[black,fill=gray!10,-] (-1/2+1/2,0) -- (0+1/2,\x) -- (1/2+1/2,0) -- (0+1/2, -\x) -- (-1/2+1/2,0); 
        \filldraw[fill=Purple!25] (-1/2+1/2,0) -- (-17/48+1/2,\y) -- (7/24 - 1/2+1/2,0) -- (-17/48+1/2, -\y) -- (-1/2+1/2,0);
        \filldraw[fill=SpringGreen!25] (7/24 ,0) -- (-7/48+1/2,-\z) -- (1/2, -\x) -- (-1/16+1/2, -\y) -- (7/24,0);
        \filldraw[black] (8/24,-1/14) circle (0.1pt) node[anchor=west]{\footnotesize{$y$}};
        \filldraw[fill=Blue!25] (1/2, -\x) -- (7/48+1/2,-\z) -- (1 - 7/24,0) -- (1/16+1/2, -\y) -- (1/2, -\x); 
        \filldraw[fill=magenta!25] (1/2 - 7/24+1/2,0) -- (17/48+1/2,\y) -- (1/2+1/2,0) -- (17/48+1/2, -\y) -- (1/2 - 7/24+1/2,0);
        \end{scope}

        \draw[black,dashed] (0,0) -- (0,-1/3 - 0.015);
        \draw[black,dashed] (0,-1/3 - 0.015) -- (1/7 + 0.01,-1/3 - 0.015);
        \draw [decorate,
        decoration = {calligraphic brace,amplitude=3.5pt,}] (1/7 + 0.01,-1/3 - 0.015) -- (0,-1/3 - 0.015);
        \node at (1/14,-1/3 - 0.08) {\footnotesize{$\ell$}};
        \draw [decorate,
        decoration = {calligraphic brace,amplitude=2pt,}] (1/7 + 0.01,-1/3 - 0.015) -- (0,0);
        \node at (1/14 - 0.02,-1/6 - 0.02) {\footnotesize{$b$}};
        \coordinate (A) at (0,-1/3 - 0.015);
        \coordinate (B) at (0,0);
        \coordinate (C) at (1/7 + 0.01,-1/3 - 0.015);
        \pic [draw, -, "\footnotesize{$\theta$}", angle eccentricity=1.3, angle radius=0.5cm] {angle = A--B--C};
        \coordinate (A) at (0,0);
        \coordinate (B) at (0,-1/3 - 0.015);
        \coordinate (C) at (1/7 + 0.01,-1/3 - 0.015);
        \pic [draw, " ", angle radius=0.2cm] {right angle = A--B--C};
    \end{tikzpicture}}
    \caption{Case (i) at level $n$ in the worst case when $p=\tfrac13$ (left) and when $p\in (\tfrac14,\tfrac13)$ (right).}
    \label{fig : case(i) rhombi}
\end{figure}

In the case $p= \frac13$, this distance is given by the height of an equilateral triangle whose sides are of length $p^{n+1}\!\sqrt{p}$ (namely the length of the sides of the level $n+1$ rhombi) and so we obtain $\lvert x-y \rvert \geq p^{n+1}\!\sqrt{3p}/2 = p^{n+1}/2 =\frac p4\diam(\Gamma)$. Thus, $\diam(\Gamma)\leq K'_2 \, \lvert x-y \rvert$ with $K'_2=\frac 4p$. In the case $p\in (\frac14,\frac13)$, this minimal distance is bounded from below by the length $\ell = b\cdot\sin(\theta) = p^{n-1}\frac{(1-p)(1-3p)}{2}$ of the side opposing the angle $\theta = \arcsin(\frac{1-3p}{2p\sqrt{p}})$, in a right triangle with a hypotenuse of length $b = p^{n}\sqrt{p}(1-p)$, and so $\lvert x-y \rvert \geq \ell = \diam(\Gamma)\frac{(1-p)(1-3p)}{4p}$ and $\diam(\Gamma) \leq K_2 \, \lvert x-y \rvert$ with $K_2=\frac{4p}{(1-p)(1-3p)}$. 

It remains to check the cases when $x$ and $y$ belong to different generalised $p$-Koch curves. By symmetry, we can assume that $x \in C(\mathbf s)$ and $y \in f_1(\mathcal C(\mathbf t))$. As before let $n$  be the maximum of all $k\in \mathbb N$ for which $x$ and $y$ lie in rhombi with non-empty intersection at level $n$, where this time the rhombus $x$ lies in is from level $n$ of the construction of $\mathcal C(\mathbf s)$, whereas that $y$ lies in is from level $n$ of the construction of $f_1(\mathcal C(\mathbf r))$. Again there then exists a continuum $\Gamma$ in $\mathcal F$ containing $x$ and $y$ that is entirely contained within these two level $n$ rhombi, giving us the bound $\diam(\Gamma) \leq 2p^n$. The two worst cases for $p = \frac13$ and $p \in (\frac14,\frac13)$ are then analogous to those in Case (i) above, see also \Cref{fig : case(i) rhombi}. For $p = \frac13$, since $\gamma$ is assumed to be injective on $[0,1)$, we obtain the same bound $\diam(\Gamma) \leq K'_2 \, \lvert x-y \rvert$ with $K'_2=\frac4p$. For $p \in (\frac14,\frac13)$ we obtain the bound $\lvert x-y \rvert \geq p^{n}(1-p)\sqrt{p} \sin(\varphi)$, where $\varphi = \frac{\pi}{6}-\arccos(\frac{1}{2\sqrt{p}})$. Thus, $\diam(\Gamma)\leq K_3 \, \lvert x-y \rvert$ with $K_3=\frac{8}{1-p}(1 - \sqrt{3(4p-1)})^{-1}$. Therefore, setting 
    \begin{align*}
    K = \begin{cases}
        \max\{K_1,K'_2\}=12 & \text{if} \; p = \frac13,\\
        \max \{ K_1,K_2,K_3\} = \frac{8}{1-p}(1 - \sqrt{3(4p-1)})^{-1} & \text{if} \; p \in (\frac14,\frac13), 
    \end{cases}
    \end{align*}
we obtain that for any $x,y\in \mathcal F$ there exists a continuum $\Gamma$ in $\mathcal F$ containing $x$ and $y$ such that $\diam(\Gamma)\leq K \, \lvert x-y \rvert$, meaning $\mathcal F$ is of bounded turning.
\end{proof}

\section{A full spectrum of areas enclosed by \texorpdfstring{$p$}{p}-Koch snowflakes}\label{sec_area}

In this section, we move to answering (Q3). That is, we study the area enclosed by any given generalised $p$-Koch snowflake $\mathcal{F} = \mathcal{F}(\mathbf s,\mathbf t,\mathbf r)$.  However, before doing so, we make the observation that, as \Cref{fig:random-snowflakes} demonstrates, $\mathcal{F}$ partitions the plane into several connected components; one unbounded connected component and one or more bounded connected components. Therefore, we need a systematic way to determine which of these connected components count as being \textsl{enclosed} by $\mathcal{F}$ and which do not -- we remark here that by \Cref{prop : Jordan curve} it is only in the case when $p=\tfrac13$ that one can obtain more than one bounded component. Loosely speaking, we say that the area of one of the bounded components will contribute to the area enclosed by $\mathcal{F}$ if, as we traverse $\mathcal{F}$ in a counter-clockwise direction, the winding number of each point in the component equals one. Here \textsl{traversing} $\mathcal{F}$ \textsl{in a counter-clockwise direction} means to traverse the snowflake through the parameterisation $\gamma:[0,1]\to \mathcal F$ from \eqref{eq : parameterisation} from $t=0$ to $t=1$. 

With this clarification in hand, let us fix some notation. For $j\in \{0,1,2\}$ let $\mathbf s^j = ((s^j_k(0),s^j_k(1),...,s^j_k(4^{k}-1)))_{k = 0}^\infty \in \mathds{S}$. Let $\mathcal F = \mathcal F(\mathbf s^0,\mathbf s^1, \mathbf s^2) = \mathcal C(\mathbf s^0)\cup f_1(C(\mathbf s^1)) \cup f_2(C(\mathbf s^2))$ be the corresponding generalised $p$-Koch snowflake. Recall, for each $j\in \{0,1,2\}$, that the generalised $p$-Koch curve $\mathcal C(\mathbf s^j)$ is the limit, with respect to the Hausdorff metric, of the curves $\mathcal C_k(\mathbf s^j)$ from \eqref{eq : approximating curve}, as $k$ tends to infinity. Since $f_1$ and $f_2$ are both bi-Lipschitz, it follows that the snowflake $\mathcal F$ is the limit, with respect to the Hausdorff metric, of the curves $\mathcal F_k = \mathcal C_k(\mathbf s^0) \cup f_1(\mathcal C_k(\mathbf s^1)) \cup f_2(\mathcal C_k(\mathbf s^2))$, as $k$ tends to infinity.

For each $k\in \mathbb N_0$, let $a_k$ denote the area of the region enclosed by $\mathcal F_k$, as in \Cref{fig:nolabel3}. Since $\mathcal F_0 = E_0\cup f_1(E_0)\cup f_2(E_0)$ is an equilateral triangle with sides of length $1$, we have $a_0 = \tfrac14\sqrt{3}$. For any $k\in \mathbb N_0$, $\mathcal F_k$ is a closed curve consisting of $3\cdot 4^{k}$ connected line segments of length $p^k$. In the construction of $\mathcal F_{k+1}$, the middle part of each of these line segments is replaced by two legs of an isosceles triangle whose orientation is decided by the values of $s_k^j(i)$ for $j\in \{0,1,2\}$ and $i \in \{0,1,...,4^k-1\}$; if $s_k^j(i) = 0$, the corresponding triangle is pointed outwards with respect to the curve, and if $s_k^j(i) = 1$, it is pointed inwards. Thus, for a fixed $k\in \mathbb N_0$, to obtain $a_{k+1}$ from $a_k$, one subtracts from $a_k$ the area of each of the inward pointing triangles and adds that of each outward pointing triangle. Since each of these triangles is an isosceles triangle with legs of length $p^{k+1}$ and a base of length $p^{k}(1-2p)$, the corresponding area of each triangle is $\frac14 p^{2k}(1-2p)\sqrt{4p-1}$. The value
    \begin{align*}
        \tau_k = \sum_{j=0}^2\#\{ i \in \{0,1,...,4^k-1\} : s_k^j(i) = 0\},\quad k\in \mathbb N_0.
    \end{align*}
is the number of the triangles that are pointed outwards, and thus $3\cdot 4^k - \tau_k$ is the number of triangles that are pointed inwards.
\tikzmath{ \x = sqrt(3)/2;}
\begin{figure}[t]
\centering
    \begin{tikzpicture}[scale = 1.2]
    %F_0
    \draw[black, thick, -, fill=gray] (0,0) -- (3,0) -- (1.5, -3*\x) -- (0,0);
    %F_1
    \draw[black, thick, -, fill=gray] (4,0) -- (5,0) -- (5.5,-\x) -- (6,0) -- (7,0) -- (6.5,-\x) -- (7,-2*\x) -- (6,-2*\x) -- (5.5,-3*\x) -- (5, -2*\x) -- (4,-2*\x) -- (4.5, -\x) -- (4,0);
    %F_2
    \draw[black, thick, -,fill=gray] (8,0) -- (8 + 1/3,0) -- (8.5,\x/3) -- (8 + 2/3, 0) -- (9,0)
    -- (9+1/6,-\x/3) -- (9.5, -\x/3) -- (9+1/3,-\x + \x/3) -- (9.5,-\x)
    -- (9.5+1/6,-\x + \x/3) -- (9.5, -\x/3) -- (10-1/6,-\x/3) -- (10,0)
    -- (10 + 1/3,0) -- (10.5,-\x/3) -- (10 + 2/3, 0) --   (11,0) -- (11-1/6,-\x/3) -- (10.5,-\x/3) -- (10.5+1/6, -\x + \x/3) -- (10.5, -\x) -- (10.5+1/6, -\x-\x/3) -- (11,-2*\x + 2*\x/3) -- (11-1/6, -2*\x +\x/3) --(11,-2*\x) -- (11-1/3, -2*\x) -- (11-1/2, -2*\x - \x/3) -- (11-2/3, -2*\x) -- (10, -2*\x) -- (10-1/6, -2*\x - \x/3) -- (9.5, -2*\x - \x/3) -- (9.5+1/6, -2*\x - 2*\x/3) --  (9.5, -3*\x) -- (9.5-1/6, -2*\x - 2*\x/3) -- (9,-2*\x - 2*\x/3) -- (9+1/6, -2*\x - \x/3) -- (9, -2*\x) -- (8 + 2/3, -2*\x) -- (8.5, -2*\x - \x/3) -- (8+1/3, -2*\x) -- (8,-2*\x) -- (8+1/6, -2*\x +\x/3) -- (8.5, -2*\x + \x/3) -- (8.5 - 1/6, -2*\x + 2*\x/3) -- (8.5,-\x) -- (8.5-1/6, -\x+\x/3) -- (8,-\x +\x/3) -- (8+1/6, -\x/3) -- (8,0);
    \end{tikzpicture}
    \caption{Examples of the areas enclosed by the curves $\mathcal F_0$, $\mathcal F_1$ and $\mathcal F_2$, with $p=\tfrac13$, $\tau_0 = 2$ and $\tau_1 = 8$.}
    \label{fig:nolabel3}
\end{figure}
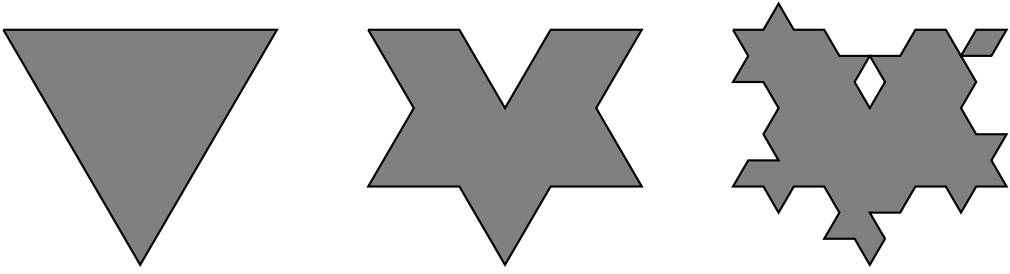
We therefore obtain, for $k\in\mathbb N_0$,
    \begin{align*}
        a_{k+1} = a_k + \tau_k \frac{p^{2k}(1-2p)\sqrt{4p-1}}{4} - (3\cdot 4^{k} - \tau_k)\frac{p^{2k}(1-2p)\sqrt{4p-1}}{4} =  a_{k} + (2\tau_k - 3\cdot 4^k) \frac{p^{2k}(1-2p)\sqrt{4p-1}}{4}.
    \end{align*}
As area is not continuous with respect to the Hausdorff metric, we cannot immediately conclude that the area enclosed by the snowflake $\mathcal F$ is given by $\lim_{k\to\infty} a_k$. To show that this holds, we use that area is monotonic with respect to set-inclusion. Recall that the curve $\mathcal F_k$ consists of $3\cdot 4^k$ line segments, each of length $p^k$, and each of the form $f_i\circ\phi_{\omega_1}\circ\cdots\circ\phi_{\omega_k}(E_0)$ with $i\in\{0,1,2\}$, where $f_0$ denotes the identity, and $\omega_{j} \in A$ for $j\in\{1,\ldots,k\}$. Let $V^{+}$ denote the two line segments joining $P_-$ with $Q_{+}$ and $Q_{+}$ with $P_+$; and let $V^{-}$ denote the two line segments joining $P_-$ with $Q_{-}$ and $Q_{-}$ with $P_+$, where $P_{\pm}$ and $Q_{\pm}$ are as defined in \eqref{eq:main_vertices}. Replacing $E_0$ in each $f_i\circ\phi_{\omega_1}\circ\cdots\circ\phi_{\omega_k}(E_0)$ by $V^+$ we obtain a curve $\mathcal F_k^+$; replacing $E_0$ by $V^-$ we obtain a curve $\mathcal F_k^-$. As the interior of $V$ is a feasible open for the OSC, for each generalised $p$-Koch curve, see \Cref{lemma : well-defined and OSC}, the region enclosed by $\mathcal F$ is contained in the region enclosed by $\mathcal F_k^+$ and contains the region enclosed by $\mathcal F_k^-$. Thus, $a_k^-\leq x\leq a_k^+$ for all $k\in\mathbb N$, where  $a_k^{\pm}$ denotes the area of the region enclosed by $\mathcal F_k^{\pm}$, respectively, and $x$ denotes the area of the region enclosed by $\mathcal F$. As $a_k^+-a_k^-=3\cdot 4^k\cdot p^{2k}\cdot \text{area}(V)$ converges to 0 as $k$ tends to infinity and as $a_k^-\leq a_k\leq a_k^+$, the area $x$ of the region enclosed by the snowflake $\mathcal F$ is
    \begin{align} \label{eq : area snowflake}
        x = \lim_{k\to \infty} a_k = a_0 + \frac{(1-2p)\sqrt{4p-1}}{4}\sum_{k=0}^\infty (2\tau_k - 3\cdot 4^k) p^{2k}.
    \end{align}
In particular, as the classical $p$-Koch snowflake is given by $s^j_k(i) = 0$ for all $j\in \{0,1,2\}$, $k\in \mathbb N_0$ and $i\in \{0,1,...,4^k\}$, $\tau_k = 3\cdot 4^k$ for each $k\in \mathbb N$. Thus, the area $y_p$ of the region enclosed by the classical $p$-Koch snowflake is given by 
    \begin{align*}
        y_p = a_0 + \frac{(1-2p)\sqrt{4p-1}}{4}\sum_{k=0}^\infty 3\cdot 4^k p^{2k} = \frac{\sqrt{3}}{4} + \frac{3}{4}\cdot \frac{\sqrt{4p-1}}{1+2p}.
    \end{align*}
Similarly, the $p$-Koch anti-snowflake is obtained by setting $s^j_k(i) = 1$ for all $j\in \{0,1,2\}$, $k\in \mathbb N_0$ and $i\in \{0,1,...,4^k\}$, hence $\tau_k = 0$ for each $k\in \mathbb N_0$,  and thus the area $x_p$ of the region enclosed by the $p$-Koch anti-snowflake is given by
    \begin{align*}
        x_p = a_0 - \frac{(1-2p)\sqrt{4p-1}}{4}\sum_{k=0}^\infty 3\cdot 4^k p^{2k} =  \frac{\sqrt{3}}{4} - \frac{3}{4}\cdot \frac{\sqrt{4p-1}}{1+2p}.
    \end{align*}
Moreover, we can use \eqref{eq : area snowflake} to prove the following, giving also an affirmative answer to (Q3).

\begin{theorem} \label{theorem:area spectrum}
    Let $p\in (\tfrac14,\tfrac13]$ be fixed. For each $y\in [x_p,y_p]$, there exists a generalised $p$-Koch snowflake $\mathcal F$ such that the area of the region enclosed by $\mathcal F$ equals $y$.
\end{theorem}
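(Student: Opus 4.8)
The plan is to reduce the geometric statement to a purely arithmetic one about subsums of a series, and then to realise every target value by a greedy digit-selection argument. First I would observe that the orientation data give us complete freedom in the integers $\tau_k$: at level $k$ there are $3\cdot 4^k$ triangles, each oriented independently via the entries $s^j_k(i)\in\{0,1\}$, so $\tau_k$ may be prescribed to equal any value in $\{0,1,\dots,3\cdot 4^k\}$, and these choices are independent across $k$. Setting $\kappa = \tfrac{(1-2p)\sqrt{4p-1}}{2}>0$ and subtracting the all-inward value from \eqref{eq : area snowflake}, one gets, for any admissible snowflake,
\begin{align*}
    x - x_p = \kappa \sum_{k=0}^{\infty} \tau_k\, p^{2k}, \qquad y_p - x_p = \kappa \sum_{k=0}^{\infty} 3\cdot 4^k\, p^{2k}.
\end{align*}
Hence it suffices to show that the subsum set
\begin{align*}
    \Sigma = \Big\{ \textstyle\sum_{k=0}^{\infty} \tau_k\, p^{2k} : \tau_k \in \{0,1,\dots,3\cdot 4^k\} \Big\}
\end{align*}
is the full interval $[0, R_0]$ with $R_0 = \sum_{k\ge 0} 3\cdot 4^k p^{2k}$, since then any $y\in[x_p,y_p]$ corresponds to the target $T = \kappa^{-1}(y-x_p)\in[0,R_0]$, the endpoints $x_p,y_p$ being attained by the all-inward and all-outward choices.

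Next I would prove the interval statement by a greedy construction, which is the arithmetic heart of the matter and where the theory of non-integer expansions enters (after the substitution $\tau_k = 4^k\theta_k$ one is expanding in the non-integer base $\beta = (4p^2)^{-1}\in[\tfrac94,4)$). Write $w_k = p^{2k}$, $M_k = 3\cdot 4^k$, and $R_k = \sum_{j\ge k} M_j w_j$; note $R_k\to 0$ because $4p^2<1$. Given $T\in[0,R_0]$, I define digits inductively by $t_k = \min\{M_k,\, \lfloor T_k/w_k\rfloor\}$, where $T_k = T - \sum_{j<k} t_j w_j$, and maintain the invariant $T_k\in[0,R_k]$. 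If $\lfloor T_k/w_k\rfloor\le M_k$ then $T_{k+1}=T_k - t_k w_k\in[0,w_k)$, and the key \emph{no-gap inequality} $w_k\le R_{k+1}$ gives $T_{k+1}<w_k\le R_{k+1}$; otherwise $t_k=M_k$ and $T_{k+1}=T_k-M_k w_k\in(0,R_{k+1}]$ directly from $T_k\le R_k = M_kw_k + R_{k+1}$. Since $T_k\le R_k\to 0$, the partial sums converge to $T$, so $T\in\Sigma$.

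It then remains to verify the no-gap inequality $w_k\le R_{k+1}$ for all $k$, and this is the step I expect to be the crux, as it is precisely where the constraint $p\in(\tfrac14,\tfrac13]$ is used. Computing $R_{k+1} = 3(4p^2)^{k+1}/(1-4p^2)$ and dividing the inequality $p^{2k}\le R_{k+1}$ by $p^{2k}$, the condition becomes
\begin{align*}
    1 \le \frac{12\cdot 4^{k} p^{2}}{1-4p^{2}}.
\end{align*}
The right-hand side is increasing in $k$, so the binding case is $k=0$, where the inequality reads $1-4p^2\le 12p^2$, i.e. $16p^2\ge 1$, i.e. $p\ge\tfrac14$; on the range $p\in(\tfrac14,\tfrac13]$ this holds (strictly), which simultaneously guarantees $4p^2<1$ so that every series above converges. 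This confirms $\Sigma=[0,R_0]$ and completes the argument. I would remark that $p=\tfrac14$ is exactly the threshold: for $p\le\tfrac14$ the no-gap inequality fails at $k=0$ and $\Sigma$ would acquire gaps, so the full-interval conclusion genuinely depends on the lower endpoint of the parameter range.
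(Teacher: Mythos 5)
Your proof is correct, and your reduction is exactly the paper's: both arguments rewrite \eqref{eq : area snowflake} relative to the anti-snowflake value $x_p$ and reduce the theorem to showing that the subsum set $\{\sum_{k\ge 0}\tau_k p^{2k} : \tau_k\in\{0,\dots,3\cdot 4^k\}\}$ is the full interval $[0,\sum_{k\ge 0}3\cdot 4^k p^{2k}]$, using that the orientation data realise every sequence $(\tau_k)$ independently across levels. Where you diverge is in how the interval statement is proved. The paper works in the base $\beta=p^{-2}\in[9,16)$, invokes a cited result from the $\beta$-expansion literature (that $[0,1]$ is covered by expansions with digit set $\{0,\dots,\lceil\beta\rceil-1\}$), and then bootstraps: writing $X_n$ for the set of tails, it exploits the self-similarity $X_n=\bigcup_{k=0}^{3\cdot 4^n}\bigl(k\beta^{-n}+X_{n+1}\bigr)$ to grow the guaranteed interval step by step from $[0,\beta^{-(n-1)}]$ up to the full range. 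You instead give a self-contained greedy digit algorithm, with the invariant $T_k\in[0,R_k]$ preserved by the Kakeya-type no-gap inequality $w_k\le R_{k+1}$, which you correctly reduce to $16p^2\ge 1$, i.e.\ precisely the hypothesis $p>\tfrac14$. Your route buys self-containedness (no external citation) and makes transparent where the lower endpoint of the parameter range is used; the paper's route buys an explicit bridge to the $\beta$-expansion literature, which it then exploits again in \Cref{EJK-thm-gen} to upgrade existence to uncountably many representations for interior values of the area. One quibble with your closing remark: at $p=\tfrac14$ the no-gap inequality holds with equality ($w_0=R_1$), so the greedy argument and the full-interval conclusion would still go through there; genuine failure occurs only for $p<\tfrac14$.
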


\begin{proof}
We can rewrite \eqref{eq : area snowflake} as
    \begin{align*}
        x = x_p +  \frac{(1-2p)\sqrt{4p-1}}{2}\sum_{k=0}^\infty \tau_k p^{2k}.
    \end{align*}
Hence, if we write $\beta = p^{-2} \in [9,16)$, we are done if we can prove that for any $z\in [0,\frac{3}{1-4p^2}]$ there exists a sequence $(\tau_k)_{k\in \mathbb N_0}$ with $\tau_k \in \{0,1,...,3\cdot 4^k\}$ for each $k\in \mathbb N_0$ satisfying $z = \sum_{k=0}^\infty \tau_k\beta^{-k}$. Here we note that
    \begin{align*}
        \frac{2(y_p-x_p)}{(1-2p)\sqrt{4p-1}} = \frac{3}{1-4p^2} = \sum_{m=0}^\infty \frac{3\cdot 4^k}{\beta^k}.
    \end{align*}
The problem has therefore turned from a geometrical one into one concerning $\beta$-expansions for real numbers. For each $n\in \mathbb N_0$ we consider the set
    \begin{align*}
        X_n = \left\{\sum_{k=n}^\infty \frac{\tau_k}{\beta^k} : \tau_k \in \{0,1,...,3\cdot 4^{k}\} \text{ for all } k\geq n \right\},
    \end{align*}
and observe $X_n =\bigcup_{k = 0}^{3\cdot 4^{n}} \{k\beta^{-n} + x : x\in X_{n+1}\} = \bigcup_{k = 0}^{3\cdot 4^{n}} (k\beta^{-n} + X_{n+1})$ for all $n\in \mathbb N_0$. It is known, see for instance \cite{MR1917322}, that for any $\beta>1$,
    \begin{align*}
        [0,1] \subseteq \left\{\sum_{m=1}^\infty \frac{\tau_m}{\beta^m} : \tau_m \in \{0,1,...,\lceil \beta\rceil -1\} \text{ for all } m\in \mathbb N \right\}.
    \end{align*}
Since we have $\beta \in [9,16)$, we have that $\lceil \beta \rceil -1 \leq 15 < 3\cdot 4^n$ for each $n\in \mathbb N_{\geq 2}$. It therefore follows that $[0,\tfrac{1}{\beta^{n-1}}] \subseteq X_n$ for each $n\in \mathbb N_{\geq 2}$. Equivalently, we have, for each $n\in \mathbb N$, that $[0,\tfrac{1}{\beta^n}] \subseteq X_{n+1}$, and hence
    \begin{align*}
        X_n = \bigcup_{k=0}^{3\cdot 4^n} \left( \frac{k}{\beta^n} + X_{n+1}\right) \supseteq \bigcup_{k=0}^{3\cdot 4^n} \left( \frac{k}{\beta^n} + \Big[0, \frac{1}{\beta^n}\Big]\right) = \Big[0, \frac{3\cdot 4^n + 1}{\beta^n}\Big].
    \end{align*}
In particular, we have, for each $n\in \mathbb N$, that $[0,\frac{3\cdot 4^{n+1} + 1}{\beta^{n+1}}] \subseteq X_{n+1}$, and hence
    \begin{align*}
        X_n = \bigcup_{k=0}^{3\cdot 4^n} \left( \frac{k}{\beta^n} + X_{n+1}\right) \supseteq \bigcup_{k=0}^{3\cdot 4^n} \left( \frac{k}{\beta^n} + \Big[0, \frac{3\cdot 4^{n+1} + 1}{\beta^{n+1}}\Big]\right) = \Big[0, \frac{3\cdot 4^n}{\beta^n} + \frac{3\cdot 4^{n+1}+ 1}{\beta^{n+1}}\Big].
    \end{align*}
Repeating the above, yields $[0, \sum_{k=n}^m \frac{3\cdot 4^k}{\beta^k} + \frac{1}{\beta^m}] \subseteq X_n$, for $n, m \in \mathbb N$ with $m > n$. Letting $m$ tend infinity, we obtain 
    \begin{align*}
        \Big[0, \sum_{k=n}^\infty \frac{3\cdot 4^k}{\beta^k} \Big) \subseteq \bigcup_{m=n+1}^\infty \Big[0, \sum_{k=n}^m \frac{3\cdot 4^k}{\beta^k} + \frac{1}{\beta^m} \Big] \subseteq X_n.
    \end{align*}
Taking $\tau_k = 3\cdot 4^k$, for each $k\geq n$, reveals that $\sum_{k=n}^\infty 3\cdot 4^k\beta^{-k} \in X_n$ for $n\in \mathbb N$, and so $X_n \supseteq [0,\sum_{k=n}^\infty 3\cdot 4^k\beta^{-k}]$ for each $n\in \mathbb N$. In particular, we have $X_1 \supseteq [0, \sum_{k=1}^\infty 3\cdot 4^k\beta^{-k} ] =  [0, \frac{12}{\beta -4}]$.  Noting that $\beta < 16$ and hence $\frac{12}{\beta -4} > 1$, we conclude that $X_0 \supseteq \bigcup_{k=0}^3[ k, k+\frac{12}{\beta-4}] = [0, 3+\frac{12}{\beta-4}] = [0, \frac{3}{1-4p^2}]$.
\end{proof}

Our discussion preceding \Cref{theorem:area spectrum} highlights that, fixing $p \in (\frac{1}{4}, \frac{1}{3}]$, there exists only one generalised $p$-Koch snowflake with enclosed area $x_{p}$, namely the $p$-Koch anti-snowflake, and only one generalised $p$-Koch snowflake with enclosed area $y_{p}$. Moreover, for $y\in (x_p,y_p)$, \Cref{theorem:area spectrum} gives the existence of a generalised $p$-Koch snowflake with enclosed area
    \begin{align*}
        y=x_p + \frac{(1-2p)\sqrt{4p-1}}{2}\sum_{k=0}^\infty \tau_k p^{2k},
    \end{align*}
where the sequence $(\tau_k)_{k\in\mathbb N_0}$ with $\tau_k\in\{0,\ldots,3\cdot 4^k\}$ is witnessed in the snowflake's construction. The following result (\Cref{EJK-thm-gen}, for which we will require \Cref{EJK-lem-gen}) -- the proof of which is an adaptation of a beautiful proof of Erd\"os, Jo\'o and Komornik \cite{EJK1990} that for $\beta \in (1,\frac{1+\sqrt{5}}{2})$, every $x \in (1, \frac{1}{\beta-1})$ has uncountably many $\beta$-expansions -- yields that, given $y \in (x_{p}, y_{p})$, there exist uncountably many generalised $p$-Koch snowflakes whose enclosed area is $y$. 

\begin{lemma}\label{EJK-lem-gen}
If $p \in (\frac14, \frac13]$ and $k \geq 2$ is an integer satisfying 
    \begin{align}\label{eq:EJK-k-bound}
        \frac{(4 p^{2})^{k}}{1-(4 p^{2})^{k}} < \frac{4p^{2}}{1-4p^{2}} - \frac{1}{3},
    \end{align}
then, for $m \in \mathbb{N}$,
    \begin{align*}
    \sum_{\substack{j \in \mathbb{N} \setminus k \mathbb{N},\\ j > m}} 3 \cdot 4^{j} p^{2j}
    \geq p^{2m}.
    \end{align*}
\end{lemma}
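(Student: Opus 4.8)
The plan is to reduce the whole statement to a single inequality in the quantity $q := 4p^{2}$. Since $p \in (\tfrac14, \tfrac13]$ we have $q \in (\tfrac14, \tfrac49] \subset (0,1)$, the summand becomes $3 \cdot 4^{j} p^{2j} = 3 q^{j}$, and the target becomes $p^{2m} = (q/4)^{m} = q^{m} 4^{-m}$. Moreover, the hypothesis \eqref{eq:EJK-k-bound} is exactly the inequality $\tfrac{q^{k}}{1-q^{k}} < \tfrac{q}{1-q} - \tfrac13$, which I would immediately rearrange into
\[
    \frac{q}{1-q} - \frac{q^{k}}{1-q^{k}} > \frac{1}{3}.
\]
All the series below converge since $0 < q < 1$.

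First I would evaluate the sum in closed form. Let $m + r$ denote the smallest multiple of $k$ strictly exceeding $m$; since the multiples of $k$ are spaced $k$ apart, $r \in \{1, \dots, k\}$, and the multiples of $k$ exceeding $m$ are precisely $m+r, m+r+k, m+r+2k, \dots$. Splitting off this subseries from the full tail $\sum_{j > m} 3 q^{j}$ gives
\[
    \sum_{\substack{j > m \\ k \nmid j}} 3 q^{j}
    = \frac{3 q^{m+1}}{1-q} - \frac{3 q^{m+r}}{1 - q^{k}}
    = 3 q^{m} \left( \frac{q}{1-q} - \frac{q^{r}}{1-q^{k}} \right).
\]
Because $q < 1$, the factor $q^{r}$ is largest at $r = 1$, so I can eliminate the dependence on $m \bmod k$ by bounding below with $r = 1$, obtaining $\sum_{j > m,\, k \nmid j} 3 q^{j} \geq 3 q^{m} \bigl( \tfrac{q}{1-q} - \tfrac{q}{1-q^{k}} \bigr)$.

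The crux, and the step I expect to be the main obstacle, is recognising the algebraic identity
\[
    \frac{q}{1-q} - \frac{q}{1-q^{k}} = q \left( \frac{q}{1-q} - \frac{q^{k}}{1-q^{k}} \right),
\]
which I would confirm by putting each side over the common denominator $(1-q)(1-q^{k})$ (both reduce to $q^{2}(1 - q^{k-1})$ over that denominator). This is precisely what makes the hypothesis usable: feeding the rearranged hypothesis into the right-hand side yields $\tfrac{q}{1-q} - \tfrac{q}{1-q^{k}} > q/3$, and hence $\sum_{j > m,\, k \nmid j} 3 q^{j} > 3 q^{m} \cdot \tfrac{q}{3} = q^{m+1}$. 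To finish I would invoke the elementary bound $q > \tfrac14$ together with $m \geq 1$, which give $q \cdot 4^{m} \geq 4q > 1$, so that $q^{m+1} \geq q^{m} 4^{-m} = p^{2m}$, the desired inequality (in fact strict). Everything apart from spotting the factor-of-$q$ identity is routine bookkeeping with convergent geometric series.
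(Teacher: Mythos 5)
Your argument is correct, and it genuinely streamlines the paper's proof. Writing $q = 4p^{2}$ as you do, the paper starts from the same decomposition (full geometric tail minus the sub-series over multiples of $k$), but then splits into cases according to the residue of $m$ modulo $k$: for $m = nk+l$ with $l \in \{1,\dots,k-2\}$ it uses crude bounds (e.g.\ $1-q^{k-l-1} \geq 1-q$ and $p^{2}4^{m+2}(1-q)\geq 1$) that never invoke the hypothesis \eqref{eq:EJK-k-bound}, and only in the extreme case $m = nk-1$ does it bring \eqref{eq:EJK-k-bound} into play. Your observation that the closed form $3q^{m}\bigl(\tfrac{q}{1-q} - \tfrac{q^{r}}{1-q^{k}}\bigr)$ is monotone in $r$, so that $r=1$ (i.e.\ $m \equiv k-1 \pmod{k}$) is the worst case, collapses the case analysis to that single extreme situation; your factorisation identity $\tfrac{q}{1-q}-\tfrac{q}{1-q^{k}} = q\bigl(\tfrac{q}{1-q}-\tfrac{q^{k}}{1-q^{k}}\bigr)$ (which checks out: both sides equal $q^{2}(1-q^{k-1})$ over $(1-q)(1-q^{k})$) then lets the hypothesis do all the work uniformly, and your closing step $q^{m+1} > q^{m}4^{-m} = p^{2m}$ for $m \geq 1$ and $q > \tfrac14$ is exactly the bound the paper needs implicitly at the end of its second case. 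A side benefit of your route: the paper's case split as literally stated ($l \in \{1,\dots,k-2\}$ or $m = nk-1$) omits the residue $l=0$, and its first branch is vacuous when $k=2$; the computation there does extend to cover $l=0$, but your worst-case reduction handles all residues at once, so no such bookkeeping question even arises.
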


\begin{proof}
Fix $k \geq 2$ and $m \in \mathbb{N}$. Let $n \in \mathbb{N}_{0}$ and $l \in \{1, 2, \dots, k-2\}$ be such that  $m = n k + l$. Then, we have that
    \begin{align*}
    \sum_{\substack{j \in \mathbb{N} \setminus k \mathbb{N},\\ j > m}} 3 \cdot (4 p^{2})^{j}
    &= 3 \sum_{\substack{j \in \mathbb{N} ,\\ j > m}} (4 p^{2})^{j} - 3 \sum_{\substack{j \in \mathbb{N} ,\\ j \geq n+1}} (4 p^{2})^{jk}\\
    &= \frac{3(4 p^{2})^{m+1} }{1-4p^{2}} - \frac{ 3(4 p^{2})^{(n+1)k} }{1-(4p^{2})^{k}}\\
    &= p^{2m} \cdot 3 \cdot 4^{m+1} p^{2} \left( \frac{1}{1-4p^{2}} - \frac{(4 p^{2})^{k-l-1} }{1-(4p^{2})^{k}} \right)\\
    &\geq p^{2m} 4^{m+2} p^{2} (1 - (4 p^{2})^{k-l-1})
    \geq p^{2m} 4^{m+2} p^{2} (1 - 4 p^{2})
    \geq p^{2m}.
\intertext{Moreover, if $k \geq 2$ is an integer satisfying \eqref{eq:EJK-k-bound}, and if $n \in \mathbb{N}$ is such that $m = nk-1$, then}
    \sum_{\substack{j \in \mathbb{N} \setminus k \mathbb{N},\\ j > m}} 3 \cdot  (4 p^{2})^{j}
    &= 3 \sum_{\substack{j \in \mathbb{N},\\ j \geq m+2}} (4 p^{2})^{j} - 3 \sum_{\substack{j \in \mathbb{N},\\ j \geq n+1}} (4 p^{2})^{jk}\\
    &= \frac{3 (4 p^{2})^{m+2} }{1-4p^{2}} - \frac{3 (4 p^{2})^{(n+1)k} }{1-(4p^{2})^{k}}\\
    &= \frac{3 (4 p^{2})^{m+2} }{1-4p^{2}} - \frac{3 (4 p^{2})^{m+1+k} }{1-(4p^{2})^{k}}\\
    &= p^{2m} \cdot 3 \cdot 4^{m+1} p^{2} \left( \frac{4 p^{2} }{1-4p^{2}} - 
    \frac{(4 p^{2})^{k} }{1-(4p^{2})^{k}} \right)
    \geq p^{2m}.\qedhere
    \end{align*}
\end{proof}

\begin{theorem}\label{EJK-thm-gen}
For $p \in (\frac{1}{4}, \frac{1}{3}]$ and $y \in (0, \frac{3}{1-4p^2})$, there exist uncountably many $(\nu_{m})_{m \in \mathbb{N}_{0}} \in \prod_{m \in \mathbb{N}_{0}}\{ 0, 1, \dots, 3 \cdot 4^{m}\}$ with
    \begin{align*}
        y = \sum_{m \in \mathbb{N}_{0}} \nu_{m} p^{2m}.
    \end{align*}
\end{theorem}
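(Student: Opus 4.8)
The plan is to recast the statement as a question about representations in the non-integer base $p^{-2}$ and to build an uncountable family of representations by exploiting a genuine free choice at each position that is a positive multiple of a suitably chosen integer $k$. Writing $q = 4p^{2} \in (\tfrac14,\tfrac49]$, the weight attached to a digit $\nu_{m}$ is $p^{2m}$, a single increment of $\nu_{m}$ moves the total by $p^{2m}$, and the maximal attainable value is $\sum_{m \in \mathbb{N}_{0}} 3 \cdot 4^{m} p^{2m} = \sum_{m} 3 q^{m} = \tfrac{3}{1-4p^{2}}$, so $y$ lies strictly inside the interval of representable values. First I would fix an integer $k \geq 2$ satisfying \eqref{eq:EJK-k-bound}; such $k$ exists because the right-hand side of \eqref{eq:EJK-k-bound} is positive precisely when $p > \tfrac14$, while its left-hand side tends to $0$ as $k \to \infty$. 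The key observation is that, setting $T_{n} = \sum_{j \geq n,\, j \notin k\mathbb{N}} 3 q^{j}$ for $n \in \mathbb{N}_{0}$, the bound \eqref{eq:EJK-k-bound} is equivalent to $T_{1} > 1$; this reformulation is used below to close a gap at position $0$.

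Next I would split $\mathbb{N}_{0}$ into the reserved set $B = \{ ik : i \in \mathbb{N}\}$ and its complement, and establish two interval-covering facts. For the reserved positions, the inequality $p^{2ik} \leq \sum_{i' > i} 3 q^{i'k}$ holds for every $i \in \mathbb{N}$ (it reduces to $4^{-ik} \leq \tfrac{3 q^{k}}{1-q^{k}}$, which is immediate since $4q > 1$), so the same covering argument as in the proof of \Cref{theorem:area spectrum} shows that the map
\begin{align*}
    f\big((\nu_{ik})_{i \in \mathbb{N}}\big) = \sum_{i \in \mathbb{N}} \nu_{ik}\, p^{2ik}, \qquad \nu_{ik} \in \{0,1,\dots,3 \cdot 4^{ik}\},
\end{align*}
is a surjection onto $\big[0,\, \tfrac{3}{1-4p^{2}} - T_{0}\big]$. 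For the complementary positions, the covering condition at a position $m \geq 1$ with $m \notin k\mathbb{N}$ is exactly $p^{2m} \leq \sum_{j > m,\, j \notin k\mathbb{N}} 3 \cdot 4^{j} p^{2j}$, which is precisely the conclusion of \Cref{EJK-lem-gen}; combining this with the fact that the four intervals $[\nu_{0}, \nu_{0} + T_{1}]$, $\nu_{0} \in \{0,1,2,3\}$, overlap because $T_{1} > 1$, the same argument yields that \emph{every} value in $[0, T_{0}]$ is representable using only digits at positions in $\mathbb{N}_{0} \setminus B$.

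With these two facts in hand the construction and the cardinality count are soft. Given a reserved digit sequence $(\nu_{ik})_{i}$ with $\sigma := f((\nu_{ik})_{i})$ lying in $I := [\max(0, y - T_{0}),\, \min(y, \tfrac{3}{1-4p^{2}} - T_{0})]$, one has $y - \sigma \in [0, T_{0}]$, so $y-\sigma$ can be completed by digits at the complementary positions, producing a full representation $y = \sum_{m} \nu_{m} p^{2m}$. Since $y \in (0, \tfrac{3}{1-4p^{2}})$, a short case check shows $I$ is contained in the range of $f$ and has positive length, whence $f^{-1}(I)$ surjects onto the continuum $I$ and is therefore uncountable; as two distinct reserved sequences differ at some position in $B$, while the completion only alters positions outside $B$, the resulting representations are pairwise distinct. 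This produces uncountably many $(\nu_{m})_{m}$ with $y = \sum_{m} \nu_{m} p^{2m}$.

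The main obstacle is the restricted covering fact, namely that the positions outside $B$ alone already represent the whole interval $[0, T_{0}]$: this is the sole reason \Cref{EJK-lem-gen} is needed, and its hypothesis \eqref{eq:EJK-k-bound} does double duty by simultaneously driving the tail estimate in the Lemma and furnishing the inequality $T_{1} > 1$ that repairs the covering at position $0$. Everything else — the surjectivity of $f$ onto the reserved range and the transfer of ``uncountably many points of an interval'' to ``uncountably many digit sequences'' — is routine once these two coverings are in place.
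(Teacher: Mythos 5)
Your proof is correct, and it shares its skeleton with the paper's: the same splitting of indices into the reserved set $k\mathbb{N}$ and its complement, \Cref{EJK-lem-gen} serving as the covering inequality that lets the complementary positions fill an interval, and uncountability extracted from the freedom at the reserved positions. The packaging, however, differs in ways worth recording. The paper adapts $k$ to $y$: after fixing $\nu_{0}$ by an explicit case split, it chooses $k$ so large that $3\sum_{m}(4p^{2})^{mk} < y' < 3\sum_{m\notin k\mathbb{N}}(4p^{2})^{m}$, which guarantees that \emph{every} reserved sequence $(\delta_{mk})_{m}$ leaves a remainder representable on the complement; that remainder is then expanded greedily, with \Cref{EJK-lem-gen} invoked exactly once to show the greedy expansion does not fall short. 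You instead keep $k$ universal (any $k$ satisfying \eqref{eq:EJK-k-bound}), prove an additional easy covering statement on the reserved positions so that their sums fill the whole interval $\bigl[0,\tfrac{3}{1-4p^{2}}-T_{0}\bigr]$, and then restrict to reserved sums in the nondegenerate interval $I$; uncountability follows since $f^{-1}(I)$ surjects onto $I$. Your observation that \eqref{eq:EJK-k-bound} is equivalent to $T_{1}>1$, so that the four unit translates $[\nu_{0},\nu_{0}+T_{1}]$ overlap, handles the digit $\nu_{0}$ more cleanly than the paper's case split. One small caveat: your appeal to ``the same covering argument as in the proof of \Cref{theorem:area spectrum}'' is not literally available for the reserved positions, because that proof is seeded by the cited fact on $\beta$-expansions with digit set $\{0,\dots,\lceil\beta\rceil-1\}$, and the digits $3\cdot 4^{ik}$ at the early reserved positions are too small relative to the effective base $p^{-2k}$ for that citation to apply; what you need (and what is true, since you verified the correct covering inequality $4^{-ik}\leq \tfrac{3q^{k}}{1-q^{k}}$) is the standard greedy interval-filling argument from the covering condition -- which is precisely the argument the paper itself runs on the complementary positions. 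With that substitution your proof is complete, and arguably more modular than the paper's, since your $k$ does not depend on $y$.
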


\begin{proof}
Since $p \in (\frac{1}{4}, \frac{1}{3}]$, we have  $3\sum_{m \in \mathbb{N}} 4^{m} p^{2m} > 1$. Thus, if we can show for $y' \in (0, 3\frac{ 4 p^{2}}{1-4p^2})$, there exist uncountably many $(\nu_{m})_{m \in \mathbb{N}} \in \prod_{m \in \mathbb{N}}\{ 0, 1, \dots, 3 \cdot 4^{m}\}$ with 
    \begin{align}\label{eq:uncountable_rep}
    y' = \sum_{m \in \mathbb{N}} \nu_{m} p^{2m},
    \end{align}
then we can set $v_0=0$ if $y \in (0, 3\frac{ 4 p^{2}}{1-4p^2})$, and $v_0=j$ if $y\in [\frac{(v_{0}-1) - (4-v_{0}) 4 p^{2}}{1-4p^2},\frac{v_{0} - (3-v_{0}) 4 p^{2}}{1-4p^2})$ for $j \in \{1, 2, 3\}$. To this end, observe, since $y' \in (0, 3\frac{4 p^{2}}{1-4p^2})$, we may choose $k\in \mathbb{N}$ as in \Cref{EJK-lem-gen}, sufficiently large so that
    \begin{align*}
        3\sum_{m \in \mathbb{N}} 4^{mk} p^{2mk}
        < y' 
        < 3\sum_{m \in \mathbb{N}} 4^{m} p^{2m} -3\sum_{m \in \mathbb{N}} 4^{mk} p^{2mk}
        = 3\sum_{m \in \mathbb{N}\setminus k\mathbb N} 4^{m} p^{2m}.
    \end{align*}
Now for any choice of $(\delta_{mk})_{m \in \mathbb{N}} \in \prod_{m\in\mathbb N}\{ 0, \dots, 3\cdot 4^{mk}\}$, of which there are uncountably many, we have
    \begin{align}\label{eq:definition_z_uncountable}
        0 <  \underbrace{y' - \sum_{m \in \mathbb{N}} \delta_{mk}p^{2mk}}_{=z}
        < 3\sum_{m \in \mathbb{N}\setminus k\mathbb N}  4^{m} p^{2m}.
    \end{align}
Defining $(\nu_{m})_{m \in \mathbb{N}\setminus k \mathbb{N}} \in \prod_{m \in \mathbb{N}\setminus k \mathbb{N}}\{ 0, 1, \dots, 3 \cdot 4^{m}\}$ inductively as follows, set $\nu_{1} = \max \{ \eta \in \{0,1, \dots, 3 \cdot 4 \} : z < \eta p^{2}\}$, and for $m \in \mathbb{N} \setminus k \mathbb{N}$ with $m \geq 2$, let
    \begin{align}\label{eq:EJK-cond3}
    \nu_{m} = \max\left\{ \eta \in \{0,1,\dots,3\cdot4^{m}\} : \eta p^{2m} + \sum_{\substack{j \in \mathbb{N} \setminus k \mathbb{N},\\ j < m}} \nu_{j}\, p^{2j} < z \right\},
    \end{align}
we claim $z = \sum_{m \in \mathbb{N}\setminus k\mathbb{N}} \nu_{m} p^{2m}$.  For if not, then, by construction, 
    \begin{align}\label{EJK-cond4}
    \sum_{m \in \mathbb{N}\setminus k\mathbb{N}} \nu_{m} p^{2m} < z.    
    \end{align}
By \eqref{eq:definition_z_uncountable}, the set $Q = \{ i \in \mathbb{N}\setminus k \mathbb{N} : \nu_{i} \neq 3\cdot 4^{i} p^{2i} \}$ is non-empty and bounded, which can be seen as follows. If $Q = \emptyset$, then $\nu_{i} = 3\cdot 4^{i} p^{2i}$ for all $i \in \mathbb{N} \setminus k\mathbb{N}$, contradicting the upper bound given in \eqref{eq:definition_z_uncountable}, and if $Q$ was unbounded, then
    \begin{align*}
    0 \leq z - \sum_{\substack{j \in \mathbb{N} \setminus k \mathbb{N},\\ j \leq m}} \nu_{j}\, p^{2j} \leq p^{2m}
    \end{align*}
for infinitely many $m \in \mathbb{N}$, contradicting \eqref{EJK-cond4}. Letting $q = \sup Q$, we observe that $\nu_{i} = 3\cdot 4^{i}$ for all $i \in \mathbb{N} \setminus k\mathbb{N}$ with $i > q$, and thus, \eqref{EJK-cond4} in tandem with \Cref{EJK-lem-gen}, implies
    \begin{align*}
    \sum_{\substack{j \in \mathbb{N} \setminus k \mathbb{N},\\ j < q}} \nu_{j}\, p^{2j} + (\nu_q+1)p^{2q}=
    \sum_{\substack{j \in \mathbb{N} \setminus k \mathbb{N},\\ j \leq q}} \nu_{j}\, p^{2j} + p^{2 q}
    \leq
    \sum_{\substack{j \in \mathbb{N} \setminus k \mathbb{N},\\ j \leq q}} \nu_{j}\, p^{2j} + 3\sum_{\substack{j \in \mathbb{N} \setminus k \mathbb{N},\\ j > q}} 4^{j} p^{2j} < z.
    \end{align*}
yielding a contradiction to \eqref{eq:EJK-cond3}.  This proves the claim, and so,
    \begin{align*}
        y
        = \sum_{m \in \mathbb{N}\setminus k\mathbb{N}} \nu_{m} p^{2m} + \sum_{m\in \mathbb{N}}\delta_{mk}p^{2mk}.
    \end{align*}
Since we have uncountably many choices for $(\delta_{mk})_{m \in \mathbb{N}} \in \prod_{m\in\mathbb N}\{ 0, \dots, 3\cdot 4^{mk}\}$, we may construct uncountably many sequences $(\nu_{m})_{m \in \mathbb{N}} \in \prod_{m \in \mathbb{N}}\{ 0, 1, \dots, 3 \cdot 4^{m}\}$ satisfying \eqref{eq:uncountable_rep}.
\end{proof}

\section{The double-sided \texorpdfstring{$p$}{p}-Koch curve and snowflake}\label{sec3}

We have seen that, for each real number $y$ between $x_{p}$ and $y_{p}$, there exists a generalised $p$-Koch snowflake with enclosed area $y$. One may therefore wonder whether the generalised $p$-Koch snowflakes fill the entire space between these two snowflakes. That is, is the union of all generalised $p$-Koch snowflakes equal to the entire region enclosed by the classical $p$-Koch snowflake minus the region enclosed by the $p$-Koch anti-snowflake? 

In order to answer this question, and thus (Q4), we require some preliminary notation and auxiliary results. To this end, we again let $\mathds S$ denote the collection of all sequences $\mathbf s = ((s_k(0),s_k(1),...,s_k(4^{k}-1)))_{k = 0}^\infty $ of tuples satisfying $s_k(i) \in \{0,1\}$ for each $k\in \mathbb N_0$ and $i\in \{0,1,...,4^{k}-1\}$, and let $A = \{0,1,2,3\}\times \{0,1\}$.

\begin{lemma}
    \; $\bigcup_{\mathbf s \in \mathds S} \Sigma(\mathbf s) = A^{\mathbb N}$, where $\Sigma(\mathbf s)$ is as defined in \eqref{eq:Sigma_s}.
\end{lemma}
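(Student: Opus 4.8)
The inclusion $\bigcup_{\mathbf s \in \mathds S} \Sigma(\mathbf s) \subseteq A^{\mathbb N}$ is immediate from the definition \eqref{eq:Sigma_s}, since each coordinate $\omega_k(\mathbf u, \mathbf s)$ lies in $A$ by construction. The plan is therefore to establish the reverse inclusion by showing that an arbitrary sequence $\omega = (\omega_k)_{k \in \mathbb N} \in A^{\mathbb N}$ can be realised as $\omega(\mathbf u, \mathbf s)$ for a suitable choice of $\mathbf u \in \{0,1,2,3\}^{\mathbb N}$ and $\mathbf s \in \mathds S$. Writing $\omega_k = (a_k, b_k)$ with $a_k \in \{0,1,2,3\}$ and $b_k \in \{0,1\}$, I would read off the two ingredients of $\omega(\mathbf u, \mathbf s)$ separately.

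The first coordinate of $\omega_k(\mathbf u, \mathbf s)$ is, by \eqref{eq:omega_u_s}, always equal to $u_k$. Hence matching first coordinates forces the choice $u_k = a_k$ for every $k$, that is $\mathbf u = (a_k)_{k \in \mathbb N}$; there is no freedom here. It remains to arrange the second coordinates. For $k = 1$ the second coordinate is $s_0(0)$, and for $k \geq 2$ it equals $s_{k-1}(I_{k-1})$, where
\begin{align*}
    I_m = \sum_{j=1}^{m} a_j\, 4^{m-j} \in \{0, 1, \dots, 4^m - 1\}
\end{align*}
for $m \geq 1$. The key observation is that $I_m$ is exactly the integer whose base-$4$ digits are $a_1, \dots, a_m$, so it is a legitimate index into the tuple $s_m = (s_m(0), \dots, s_m(4^m - 1))$; moreover, since $\mathbf u$ is now fixed, at each level $m$ the construction references only the single index $I_m$.

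This makes the choice of $\mathbf s$ essentially free away from the referenced indices. I would set $s_0(0) = b_1$ and, for each $m \geq 1$, set $s_m(I_m) = b_{m+1}$ while declaring every other entry of the tuple $s_m$ to be $0$ (any value in $\{0,1\}$ works). This defines a bona fide element $\mathbf s \in \mathds S$, and by construction $\omega_k(\mathbf u, \mathbf s) = (a_k, b_k) = \omega_k$ for every $k$, whence $\omega = \omega(\mathbf u, \mathbf s) \in \Sigma(\mathbf s)$, completing the reverse inclusion.

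The argument is essentially a bookkeeping exercise, and the only point that requires care — which I regard as the crux — is the verification that the index $I_m$ arising from the fixed sequence $\mathbf u$ always lands in the admissible range $\{0, \dots, 4^m - 1\}$ and, crucially, that for a fixed $\mathbf u$ only one entry of each level-$m$ tuple $s_m$ is constrained. This is what guarantees that no two of the imposed conditions $s_m(I_m) = b_{m+1}$ can clash, so that $\mathbf s$ can indeed be chosen to realise every prescribed $b_{m+1}$ simultaneously.
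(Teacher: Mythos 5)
Your proposal is correct and follows essentially the same route as the paper's proof: both match the first coordinates by forcing $u_k = a_k$ and then define $\mathbf s$ by prescribing $s_{k-1}$ at the single index $\sum_{j=1}^{k-1} u_j 4^{k-1-j}$ to be the desired second coordinate. Your additional verification that each level of $\mathbf s$ is constrained at only one admissible index (so the prescriptions never clash) is a point the paper leaves implicit, but it is the same argument.
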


\begin{proof}
    By construction, $\bigcup_{\mathbf s \in \mathds S} \Sigma(\mathbf s) \subseteq A^{\mathbb N}$. Thus, it remains to prove the reverse inclusion. To this end, let $\omega \in A^{\mathbb N}$ and write $\omega = ((u_k,i_k))_{k\in \mathbb N}$, where $u_k \in \{0,1,2,3\}$ and $i_k\in \{0,1\}$ for each $k\in \mathbb N$. Choosing $\mathbf s = ((s_k(0),...,s_k(4^k - 1))_{k=0}^\infty$ in $\mathds{S}$ with $s_0(0) = i_1$ and $s_{k-1}(\sum_{j=1}^{k-1} u_j4^{k-1-j})  =i_k$ for $k\in \mathbb N_{\geq 2}$, and letting $\mathbf{u} = (u_k)_{k\in \mathbb N}$, we have $\omega = \omega(\mathbf{u}, \mathbf s) \in \Sigma(\mathbf s)$.
\end{proof}

If we let $\mathcal D$ denote the union of all generalised $p$-Koch curves, by construction, we have that
    \begin{align}\label{eq : double-sided}
        \mathcal D = \bigcup_{\mathbf s \in \mathds S} \mathcal C(\mathbf s) = \bigcup_{\mathbf s \in \mathds S} \bigcup_{(\omega_k)_{k\in \mathbb N} \in \Sigma(\mathbf s)}
        \bigcap_{k\in \mathbb N} \left(\phi_{\omega_1}\circ \phi_{\omega_2} \circ \cdots \circ \phi_{\omega_k}\right)(V) = \bigcup_{(\omega_k)_{k\in \mathbb N} \in A^{\mathbb N}}
        \bigcap_{k\in \mathbb N} \left(\phi_{\omega_1}\circ \phi_{\omega_2} \circ \cdots \circ \phi_{\omega_k}\right)(V). 
    \end{align}
Since $\phi_{(0,0)} = \phi_{(0,1)}$ and $\phi_{(3,0)} = \phi_{(3,1)}$, we can replace $A$ by the set $B = A\setminus\{(0,1),(3,1)\}$ in \eqref{eq : double-sided} to obtain
    \begin{align*}
        \mathcal D = \bigcup_{(\omega_k)_{k\in \mathbb N} \in B^{\mathbb N}}
        \bigcap_{k\in \mathbb N} \left(\phi_{\omega_1}\circ \phi_{\omega_2} \circ \cdots \circ \phi_{\omega_k}\right)(V).
    \end{align*}
as with the generalised $p$-Koch curves, we can also construct $\mathcal D$ as the limit, with respect to the Hausdorff metric, of the sets $\mathcal D_k = \bigcup_{\omega_1,...,\omega_k \in B} \phi_{\omega_1}\circ \cdots \phi_{\omega_k}(E_0)$ for $k\in \mathbb N$, where $E_0$ is again the line segment $E_0 = [-\tfrac12,\tfrac12]\times \{0\}$. We see that this process of generating $\mathcal D$ is analogous to that of generating a generalised $p$-Koch curve, but rather than replacing the middle segment of a line segment by two sides of a triangle that is either pointed upward or downward with respect to the orientation of the line segment, we replace it by two sides of both of these triangles. Because of this, we refer to $\mathcal D$ as the \textbf{double-sided $p$-Koch curve}. Note that this name is somewhat misleading, as unlike the generalised $p$-Koch curves, $\mathcal D$ itself is not a curve.

\begin{wrapfigure}[12]{r}{0.45\textwidth}
\vspace{-2em}
  \begin{center}
        \includegraphics[width=0.675\linewidth]{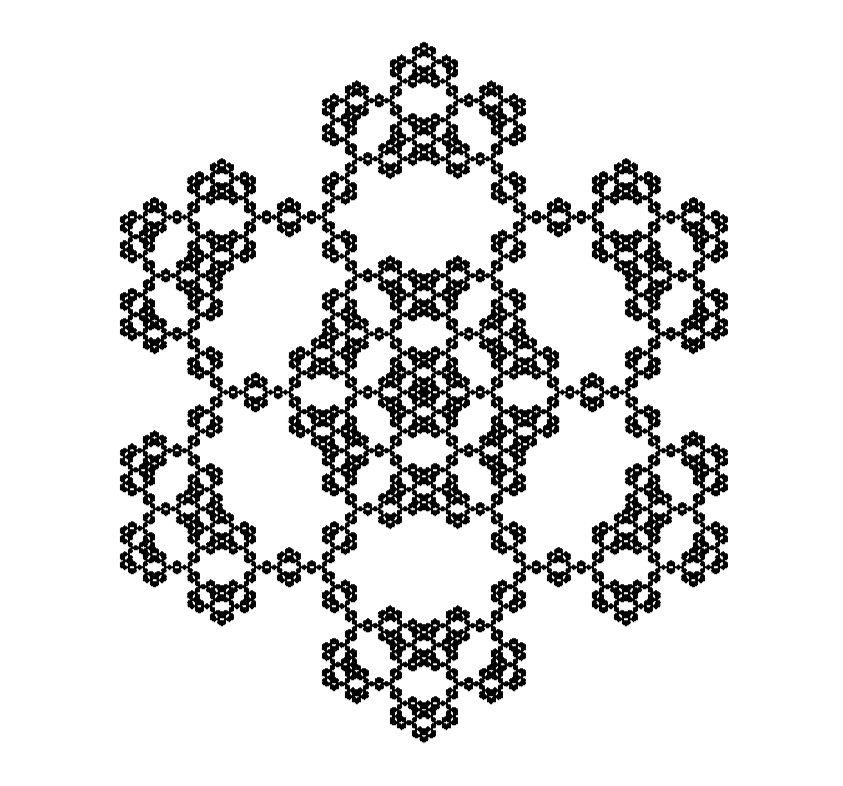}
  \end{center}
  \vspace{-1em}
    \caption{The double-sided $p$-Koch snowflake with $p=\tfrac13$.}
    \label{fig:enter-label4}
\end{wrapfigure}
We call $\mathcal{DF} = \bigcup_{\mathbf s,\mathbf t,\mathbf r \in \mathds S} \mathcal F(\mathbf s,\mathbf t, \mathbf r) = \mathcal D \cup f_1(\mathcal D) \cup f_2(\mathcal D)$, the union of all generalised $p$-Koch snowflakes, the \textbf{double-sided $p$-Koch snowflake}, see \Cref{fig:enter-label4}. Thus, (Q4) is equivalent to asking whether the double-sided $p$-Koch snowflake $\mathcal{DF}$ covers the entire region between the $p$-Koch snowflake and the $p$-Koch anti-snowflake. Since this region has an area of $y_p - x_p$, for $\mathcal{DF}$ to cover this region, it must also contain an open set.

As with the $p$-Koch curve, but unlike with generalised \mbox{$p$-Koch} curves, $\mathcal D$ is the unique attractor set of the iterated function system $\{\phi_{(a,b)} : (a,b)\in B\}$. Since this iterated function system consists of six similarity maps, each with contraction ratio $p$, and satisfies the open set condition by \Cref{lemma : well-defined and OSC}, it follows from \cite[Theorem 5.3(1)]{hutchinson1981fractals} that the Hausdorff dimension of the double-sided Koch curve $\mathcal D$ is given by 
    \begin{align*}
        \dim_{\mathcal{H}}(\mathcal D) = \frac{\log(6)}{\log(p^{-1})}.
    \end{align*}
Consequentially, for $p\in (\tfrac14,\tfrac13]$, we have $\dim_{\mathcal{H}}(\mathcal{DF}) = \dim_{\mathcal{H}}(\mathcal D) < 2$. Therefore, it follows that $\mathcal{DF}$ cannot contain any open subset of $\mathbb R^2$ and thus does not cover any region of positive area.

\section*{Acknowledgements}

This work began as a research experience with undergraduates (REUs) project at the University of Birmingham with K.\,Chen and D.\,Wang, and we are very grateful to them for their insights and helpful discussions. S.\,van\,Golden also acknowledges EPSRC grants EP/S02297X/1 for financial support during the development of this article.

\bibliographystyle{alpha}
\bibliography{bib}

\begin{thebibliography}{Man82}

\bibitem[DK02]{MR1917322}
Karma Dajani and Cor Kraaikamp.
\newblock {\em {Ergodic theory of numbers}}, volume~29 of {\em {Carus
  Mathematical Monographs}}.
\newblock Mathematical Association of America, Washington, DC, 2002.

\bibitem[Edg98]{MR1484412}
Gerald~A. Edgar.
\newblock {\em {Integral, probability, and fractal measures}}.
\newblock Springer-Verlag, New York, 1998.

\bibitem[EJK90]{EJK1990}
P\'al Erd\"os, Istv\'an Jo\'o', and Vilmos Komornik.
\newblock {Characterization of the unique expansions $1=\sum ^{\infty
  }_{i=1}q^{-n_ i}$ and related problems}.
\newblock {\em Bull. Soc. Math. France}, 118(3):377--390, 1990.

\bibitem[Fal13]{falconer2013fractal}
Kenneth Falconer.
\newblock {\em {Fractal geometry: Mathematical Foundations and Applications}}.
\newblock John Wiley \& Sons, 2013.

\bibitem[Hut81]{hutchinson1981fractals}
John~E Hutchinson.
\newblock Fractals and self similarity.
\newblock {\em {Indiana University Mathematics Journal}}, 30(5):713--747, 1981.

\bibitem[Man82]{MR665254}
Benoit~B. Mandelbrot.
\newblock {\em {The fractal geometry of nature}}.
\newblock W. H. Freeman and Co., San Francisco, CA, 1982.

\bibitem[Roh01]{Rhode2001}
Steffen Rohde.
\newblock Quasicircles modulo bilipschitz maps.
\newblock {\em {Rev. Mat. Iberoam.}}, 17(3):643--659, 2001.

\bibitem[vK06]{von1906methode}
Helge von Koch.
\newblock Une m{\'e}thode g{\'e}om{\'e}trique {\'e}l{\'e}mentaire pour
  l’{\'e}tude de certaines questions de la th{\'e}orie des courbes planes.
\newblock {\em {Acta Mathematica}}, 30(1):145--174, 1906.

\end{thebibliography}

\end{document}